\newcommand{\diag}{\mbox{diag}}
\newcommand{\E}{\mathbb{E}}
\newcommand{\Prob}{\mathbb{P}}
\newcounter{count}
\newcounter{asscount}
\newenvironment{theorem}[1][Theorem \arabic{count}]{\vspace{1em}\refstepcounter{count}\begin{trivlist}
\item[\hskip \labelsep {\bfseries #1}]\em}{\end{trivlist}\vspace{1em}}
\newenvironment{lemma}[1][Lemma \arabic{count}]{\vspace{1em}\refstepcounter{count}\begin{trivlist}
\item[\hskip \labelsep {\bfseries #1}]\em}{\end{trivlist}\vspace{1em}}
\newenvironment{corollary}[1][Corollary \arabic{count}]{\vspace{1em}\refstepcounter{count}\begin{trivlist}
\item[\hskip \labelsep {\bfseries #1}]\em}{\end{trivlist}\vspace{1em}}
\newenvironment{proposition}[1][Proposition \arabic{count}]{\vspace{1em}\refstepcounter{count}\begin{trivlist}
\item[\hskip \labelsep {\bfseries #1}]\em}{\end{trivlist}\vspace{1em}}
\newenvironment{proof}[1][Proof]{\vspace{1em}\begin{trivlist}
\item[\hskip \labelsep {\bfseries #1}]}{\hfill$\Box$\end{trivlist}\vspace{1em}}
\newenvironment{assumption}[1][Assumption \arabic{asscount}]{\vspace{1em}\refstepcounter{asscount}\begin{trivlist}
\item[\hskip \labelsep {\bfseries #1}]}{\end{trivlist}\vspace{1em}}
\begin{document}
\begin{frontmatter}

\title{Logistic regression and Ising networks: prediction and estimation when violating lasso assumptions}
\runtitle{}
\author{Lourens Waldorp}
\author{Maarten Marsman}
\author{Gunter Maris}
\runauthor{Waldorp et al.}
\runtitle{Ising model and logistic regression}
\address{University of Amsterdam, Nieuwe Achtergracht 129-B, 1018 WB, the Netherlands}

\begin{abstract}
The Ising model was originally developed to model magnetisation of solids in statistical physics. As a network of binary variables with the probability of becoming 'active' depending only on direct neighbours, the Ising model appears appropriate for many other processes. For instance, it was recently applied in psychology to model co-occurrences of mental disorders. 
It has been shown that the connections between the variables (nodes) in the Ising network can be estimated with a series of logistic regressions. This naturally leads to questions of how well such a model predicts new observations and how well parameters of the Ising model can be estimated using logistic regressions. Here we focus on the high-dimensional setting with more parameters than observations and consider violations of assumptions of the lasso. In particular, we determine the consequences for both prediction and estimation when the sparsity and restricted eigenvalue assumptions are not satisfied. We explain by using the idea of connected copies (extreme multicollinearity) the fact that prediction becomes better when either sparsity or multicollinearity is not satisfied. We illustrate these results with simulations. 
\end{abstract}

\begin{keyword}
\kwd{Ising model} 
\kwd{Bernoulli graphs} 
\kwd{lasso assumptions}
\end{keyword}

\end{frontmatter}

\section{Introduction}\noindent
Logistic regression models the ratio of success versus failure for binary variables. These models are convenient and useful in many situations. To name one example, in genome wide association scans (GWAS) a combination of alleles on single nucleotide polymorphisms (SNPs) is either present or not \citep{Cantor:2010}. Recently it became clear that logistic regression can also be used to obtain estimates of connections in a binary network \citep[e.g., ][]{Ravikumar:2010,Buhlmann:2011}. A particular version of a binary network is the Ising model, in which the probability of a node being 'active' is determined only by its direct neighbours (pairwise interactions only). The Ising model originated in statistical physics and was used to model magnetisation of solids \citep{Kindermann:1980,Cipra:1987,Baxter:2007}, and was investigated extensively by \citet{Besag:1974} and \citet{Cressie:1993} and recently by \citet{Marsman:2017}, amongst others, in a statistical modelling context. Recently, the Ising model has also been applied to modelling networks of mental disorders \citep{Borsboom:2011,Borkulo:2014}. The objective in models of psychopathology are to both explain and predict certain observations like co-occurrences of disorders (comorbidity).  

Here we focus on violations of the assumptions of lasso in logistic regression with high-dimensional data (more parameters than observations, $p>n$). In particular, we consider the consequences for both prediction and estimation when violating the assumptions of sparsity and restricted eigenvalues (multicollinearity). For sparse models and $p>n$ it has been shown that statistical guarantees about the underlying network and its coefficients can be obtained with certain assumptions for Gaussian data \citep{Meinshausen:2006,Bickel:2009,Hastie:2015}, for discrete data \citep{Loh:2012}, and for exponental family distributions \citep{Buhlmann:2011}. Specifically, \citet{Ravikumar:2010} show that, under strong regularity conditions, using a series of regressions for the conditional probability in the Ising model (logistic regression), the correct structure (topology) of a network can be obtained in the high-dimensional setting. 

In many practical settings it is uncertain whether the assumptions of the lasso for accurate network estimation hold. Specifically, the assumptions of sparsity and the restricted eigenvalues \citep{Bickel:2009} are in many situations untestable. We therefore investigate here how estimation and prediction in Ising networks are affected by violating the sparsity and restricted eigenvalue assumptions. The setting of logistic regression and nodewise estimation of the Ising model parameters allows us to clearly determine how and why prediction and estimation are affected. 
We use the idea of connected nodes in a graph that are identical in the observations (and call them connected copies) to show why prediction is better for graph structures that violate the restricted eigenvalue or sparsity assumption. These connected copies represent the idea of extreme multicollinearity. One way to view connected copies is obtaining edge weights that lead to a network with perfect correlations between nodes (variables). We therefore compare in terms of prediction and estimation different situations where we violate the restricted eigenvalue or sparsity assumption based on different data generating processes. An example of a setting where near connected copies in networks are found is in high resolution functional magnetic resonance imaging. Here time series of contiguous voxels, that are connected \citep[also physically, see e.g., ][]{Johansen-Berg:2004}, are near exact copies of one another. The concept of connected copies allows us to determine the consequences for prediction loss, by using the fact that subsets of connected copies do not change the risk or $\ell_{1}$ norm. We also show that prediction loss is a lower bound for estimation error (in $\ell_{1}$) and so by consequence, if prediction loss increases, so does estimation error. 

We first provide some background in Section \ref{sec:log-regression} on the Ising model and its relation to logistic regression. To show the consequences of violating the assumptions of multicollinearity and sparsity we discuss these assumptions at length in Section \ref{sec:assumptions}. We also show how they provide the statistical guarantees for the lasso \citep[e.g., ][]{Wainright:2012a,Buhlmann:2011,Ravikumar:2010}. Then armed with these intuitions, we give in Section \ref{sec:violations} some insight into the consequences for prediction and estimation when the sparsity or restricted eigenvalue assumption is violated. We also provide some simulations to confirm our results. 

\section{Logistic regression and the Ising model}\label{sec:log-regression}
The Ising model is part of the exponential family of distributions \citep[see, e.g., ][]{Brown:1986,Young:2005,Wainwright:2008}. Let $X=(X_{1},X_{2},\ldots,X_{p})$ be a random variable with values in $\{0,1\}^{p}$. 
The Ising model can then be defined as follows. Let $G$ be a graph consisting of nodes in $V=\{1,2,\ldots,p\}$ and edges $(s,t)$ in $E\subseteq V\times V$. To each node $s\in V$ a random variable $X_{s}$ is associated with values in  $\{0,1\}$. The probability of each configuration $x$ depends on a main effect (external field) and pairwise interactions. It is sometimes referred to as the auto logistic-function \citep{Besag:1974}, or a pairwise Markov random field, to emphasise that the parameter and sufficient statistic space are limited to pairwise interactions \citep{Wainwright:2008}. Each $x_{s}\in \{0,1\}$ has conditional on all remaining variables (nodes) $X_{\backslash s}$ probability of success $\pi_{s}:=\Prob(X_{s}=1\mid x_{\backslash s})$, where $x_{\backslash s}$ contains all nodes except $s$.
Let $\xi=(m,A)$ contain all parameters, where the $p\times p$ matrix $A$ contains the pairwise interaction strengths and the $p$ vector $m$ is the main effects (external field). The distribution for configuration $x$ of the Ising model is then 
\begin{align}
\Prob(x) = \frac{1}{Z(\xi)}\exp\left( \sum_{s\in V}m_{s}x_{s}+\sum_{(s,t)\in E}A_{st}x_{s}x_{t}\right)
\end{align}
In general, the normalisation $Z(\xi)$ is intractable, because the sum consists of $2^{p}$ possible configurations for $x\in\{0,1\}^{p}$; for example, for $p=30$ we obtain over 1 million configurations to evaluate in the sum in $Z(\xi)$ (see, e.g., \citet{Wainwright:2008} for lattice (Bethe) approximations).

Alternatively, the conditional distribution does not contain the normalisation constant $Z(\xi)$ and so is more amenable to analysis. The conditional distribution is again an Ising model \citep{Besag:1974,Kolaczyk:2009}
\begin{align}\label{eq:cond-prob}
\pi_{s}=\Prob(X_{s}=1\mid x_{\backslash s}) = \frac{\exp\left( m_{s}+\sum_{t:(s,t)\in E}A_{st}x_{t} \right)}{1+\exp\left( m_{s}+\sum_{t:(s,t)\in E}A_{st}x_{t} \right)}.
\end{align}
It immediately follows that the log-odds \citep{Besag:1974} is 
\begin{align}\label{eq:log-odds}
\mu_{s}(x_{\backslash s})=\log\left(\frac{\pi_{s}}{1-\pi_{s}}\right)
=m_{s} +\sum_{t:(s,t)\in E}A_{st}x_{t}.
\end{align}
For each node $s\in V$ we collect the $p$ parameters $m_{s}$ and $(A_{st}, t\in V\backslash \{s\})$ in the vector $\theta$. Note that the log-odds $\theta\mapsto\mu_{\theta}$ is a linear function, and so if $x=(1,x_{\backslash s})$ then $\mu_{\theta}=x^{\sf T}\theta$. The theory of generalised linear models (GLM) can therefore immediately be applied to yield consistent and efficient estimators of $\theta$ when sufficient observations are available, i.e., $p<n$ \citep{Nelder:1972,Demidenko:2004}. To obtain an estimate of $\theta$ when $p>n$, we require regularisation or another method \citep{Hastie:2015,Buhlmann:2013}. 

\subsection{Nodewise logistic regression}\noindent
\citet{Meinshausen:2006} showed that for sparse models the true neighbourhood of a graph can be obtained with high probability by performing a series of conditional regressions with Gaussian random variables. For each node $s\in V$ the set of nodes with nonzero $A_{st}$ are determined, culminating in a neighbourhood for each node. Combining these results leads to the complete graph, even when the number of nodes $p$ is much larger than the number of observations $n$. This is called neighbourhood selection, or nodewise regression. This idea was extended to Bernoulli (Ising) graphs by \citet{Ravikumar:2010}, but see also \citet[][chapters 6 and 13]{Buhlmann:2011}.
Nodewise regression allows us to use standard logistic regression to determine the neighbourhood for each node. This framework, of course, comes at a cost, and two strong assumptions are required. We discuss these assumptions in Section \ref{sec:assumptions}. 

To estimate the coefficients, \citet{Meinshausen:2006} used a sequential regression procedure for Gaussian data where each node in turn is treated as the dependent variable and the remaining ones as independent variables. By repeating this analysis for all nodes in $V$, a total of $p-1$ neighbourhood estimates of nonzero parameters are obtained for all nodes $s\in V$. Since each node is considered twice, the estimates are often combined by either an {\em and}-rule, where an edge is obtained if $\hat{A}_{st}\ne 0$ and $\hat{A}_{ts}\ne 0$, or an {\em or}-rule, where either parameter estimate can be nonzero \citep{Meinshausen:2006}. 

\citet{Ravikumar:2010} translated this procedure to binary variables using pseudo-likelihoods. Recall that $\theta\mapsto \mu_{\theta}$ is the linear function $\mu_{\theta_{s}}(x_{\backslash s})= m_{s}+\sum_{t\in V\backslash s}A_{st}x_{t}$ of the conditional Ising model obtained from the log-odds (\ref{eq:log-odds}). The parameters in the $p$ dimensional vector $\theta$ are $m_{s}$ for the intercept (external field) and $(A_{st},t\in V\backslash \{s\}))$, representing the connectivity parameters for node $s$ based on all remaining nodes $V\backslash \{s\}$. Let the $n\times p$ matrix $X_{\backslash s}=(1_{n},X_{1}, \ldots,X_{p})$ be the matrix with the vector of 1s in $1_{n}$ and the remaining variables without $X_{s}$. We write  $y_{i}$ for the observation $x_{i,s}$ of node $s$ and $x_{i}=(1,x_{i,\backslash s})$ and  $\mu_{i}:=\mu_{i,\theta_{s}}(x_{i,\backslash s})$, basically leaving out the subscript $s$ to index the node, and only use the node index $s$ whenever circumstances demand it. Let the loss function be the negative log of the conditional probability $\pi$ in (\ref{eq:cond-prob}), known as a pseudo log-likelihood \citep{Besag:1974}
\begin{align}\label{eq:psi-loss}
\psi(y_{i},\mu_{i}) :=-\log \Prob (y_{i}\mid x_{i}) = -y_{i}\mu_{i} + \log(1+ \exp(\mu_{i})).
\end{align}
For logistic loss $\psi$ the theoretical risk is defined as 
\begin{align}\label{eq:risk}
R_{\psi}(\mu) =  \frac{1}{n}\sum_{i=1}^{n}\E\psi(y_{i},\mu_{i}).
\end{align}
The value that optimises the risk is $\theta^{*}=\arg\inf_{\theta\in\mathbb{R}^{p}} R_{\psi}(\mu)$; given the choice of logistic loss we can do no better than $\theta^{*}$ in terms of the population. Of course we do not have the theoretical risk and so we use an empirical version
\begin{align}\label{eq:emp-risk}
R_{n,\psi}(\mu)=\frac{1}{n}\sum_{i=1}^{n}\psi(y_{i},\mu_{i}).
\end{align}
Define $\mu^{*}:=\mu_{\theta^{*}}(x)$, which uses the optimal value $\theta^{*}$ under theoretical risk. For sparse estimation the $\ell_{1}$ (lasso) minimisation is given by 
\begin{align}\label{eq:lasso}
\hat{\theta}=\arg \min_{\theta\in\mathbb{R}^{p}} \left\{\frac{1}{n}\sum_{i=1}^{n}\psi(y_{i},\mu_{i})+\lambda||\theta||_{1} \right\}
\end{align}
where $||\theta||_{1}=\sum_{t\in V\backslash\{s\}}|\theta_{t}|$ is the $\ell_{1}$ norm, $\lambda$ is a fixed penalty parameter. Since $\psi$ is convex and $||\theta||_{1}$ is convex, the objective function $R_{n,\psi}+\lambda||\theta||_{1}$ in (\ref{eq:lasso}) is convex, which allows us to apply convex optimisation. We discuss how to obtain the parameters with the coordinate descent algorithm in Section \ref{sec:violations}.

Once the parameters are obtained it turns out that inference on network parameters is in general difficult with $\ell_{1}$ regularisation \citep{Potscher:2009b}. One solution is to desparsify it by adding a projection of the residuals \citep{Geer2013,Javanmard:2014,Zhang:2014,Waldorp:2015b}, which is sometimes referred to as the desparsified lasso. Another type of inference is one where clusters of nodes obtained from the lasso are interpreted instead of individual nodes \citep{Lockhart:2014}.

To illustrate the result of an implementation of logistic regression for the Ising model, consider Figure \ref{fig:ising-graphs}. We generated a random Erd\"{o}s-Renyi graph (left panel) with $p=100$ nodes and probability of an edge 0.05, resulting in 258 edges.  The {\em igraph} package in {\small\sf R} was used with {\em erdos.renyi.game} \citep{Csardi:2006}. To generate data ($n=50$ observations of the $p=100$ nodes) from the Ising model the package {\em IsingSampler} was used, and to obtain estimates of the parameters the package {\em IsingFit} was used \citep{Borkulo:2014} in combination with the {\em and} rule.
\begin{figure}[t!]
\centering
\includegraphics[width=5.5cm]{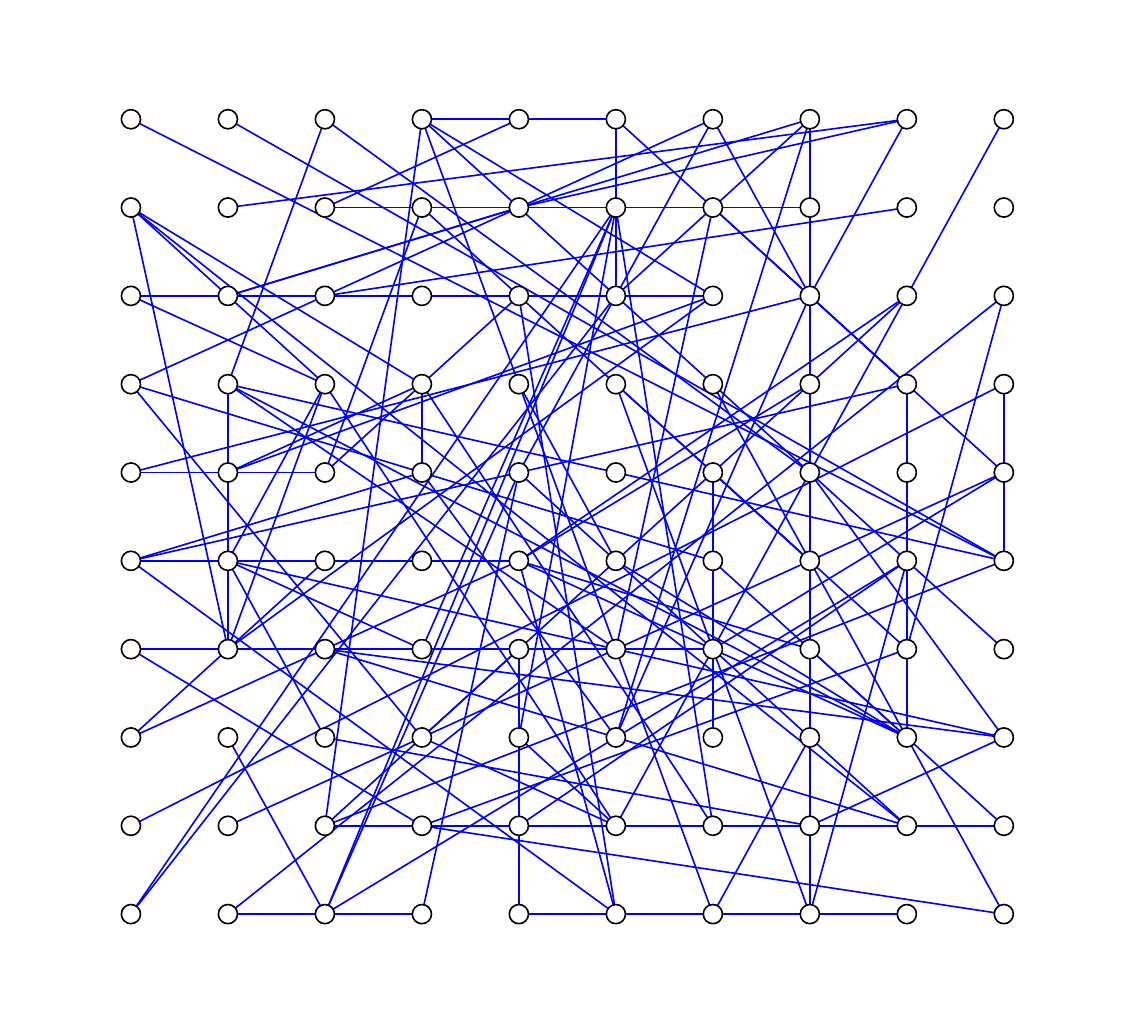}
~
\includegraphics[width=5.5cm]{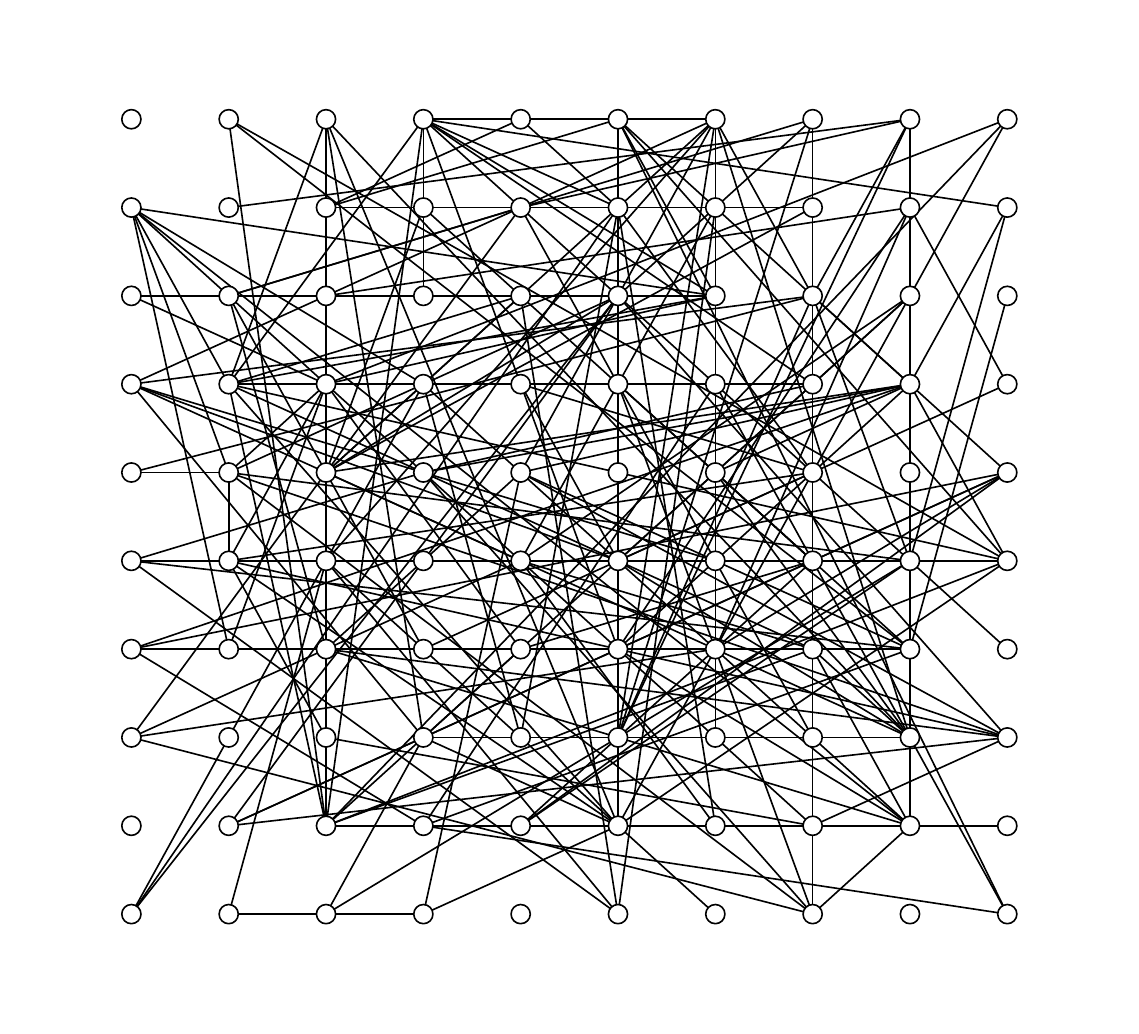}
\caption{Ising networks with $p=100$ nodes. Left panel: true network used to generate data. Right panel: estimated Ising model with nodewise logistic regression from $n=50$ generated observations.  }
\label{fig:ising-graphs}
\end{figure}
The recall (true positive rate) for this example was 0.69 and the precision (positive predictive value) was 0.42. So we see that about 30\% of the true edges are missing and about 60\% of the estimated edges is incorrect. This is not surprising given that we have 4950 possible edges to determine and only 50 observations. (More details on the simulation are in Section \ref{sec:numerical}.)

\section{Assumptions for prediction and estimation}\label{sec:assumptions}
In order to determine the consequences of violating the assumptions of the lasso in logistic regression, we discuss the assumptions for accurate prediction and estimation. Both prediction and estimation require that the solution is sparse; informally, that the number of non-zero edges in the graph is relatively small (see Assumption \ref{ass:sparsity} below). For accurate estimation we also require an assumption on the covariance between the nodes in the graph. Several types of assumptions have been proposed \citep[see ][ for an excellent overview and additional results on obtaining the lasso solution]{Geer:2009}, but here we focus on the restricted eigenvalue assumption because of its direct connection to multicollinearity. 

\subsection{Sparsity}\label{sec:sparsity}
Central to lasso estimation is the assumption that the underlying problem is low-dimensional \citep{Buhlmann:2011,Giraud:2014}. This is the assumption of sparsity. This is essential because whenever $p>n$ there is no unique solution to  the empirical risk $R_{n,\psi}(\mu)$ defined in (\ref{eq:emp-risk}) \citep{Wainwright:2009}. Sparsity can be defined in different ways. The most common is a restriction on the number of nonzero edges, sometimes referred to as coordinate sparsity \citep{Giraud:2014}. Let $S_{0}$ denote the support containing the indices of the nonzero coefficients, i.e., $S_{0}:=\{j: \theta_{j}\ne 0\}$ and its size $s_{0}=|S_{0}|$. %
\begin{assumption}\label{ass:sparsity}
(Coordinate sparsity) The size $s_{0}$ of the set of nonzero coefficients $S_{0}$ in $\theta^{*}$ is of order $o(\sqrt{n/\log p})$. 
\end{assumption}
There are other forms of sparsity, like the fused sparsity, where the support is defined as $\{j:\theta_{j}-\theta_{j-1}\ne 0\}$. This ensures that there are relatively few jumps in, for instance, a piecewise continuous function \citep[see][for more details]{Giraud:2014}. 
Another form of sparsity is where the $\ell_{1}$ size of the parameter vector $\theta$ is restricted. We use this to show that prediction (classification) in logistic regression is accurate.
\begin{assumption}\label{ass:sparsity-ell1}
($\ell_{1}$-sparsity) The $\ell_{1}$ norm of the coefficients $\theta^{*}$ is of order $o(\sqrt{n/\log p})$, i.e., $||\theta^{*}||_{1}=o(\sqrt{n/\log p})$. 
\end{assumption}
In logistic regression there is a natural classifier that predicts whether $y_{i}$ is 1 or 0. We simply check whether the probability of a 1 is greater than 1/2, that is, whether $\pi_{i} > 1/2$. Because $\mu_{i}>0$ if and only if $\pi_{i}>1/2$ we obtain the natural classifier
\begin{align}\label{eq:classifier}
\mathcal{C}(y_{i}) = \mathbbm{1}\{ \mu_{i}>0 \}
\end{align}
where $\mathbbm{1}$ is the indicator function. This is called 0-1 loss or sometimes Bayes loss \citep{Hastie:2015}. 
Instead of 0-1 loss we use logistic loss (\ref{eq:psi-loss}) to determine how well we predict individual observations $y_{i}$ to which class they belong, 0 or 1. Define the prediction loss (sometimes called excess risk) with logistic loss $\psi$ as
\begin{align}\label{eq:prediction-loss}
\mathcal{L}_{\psi}(\mu) = \frac{1}{n}\sum_{i=1}^{n}\E(\psi(y_{i},\mu_{i})-\psi(y_{i},\mu_{i}^{*})).
\end{align}
Note that by definition of $\theta^{*}$ that $\mathcal{L}_{\psi}(\mu)\ge 0$ for any $\theta\mapsto\mu_{\theta}$. A similar definition is possible for 0-1 loss using $\mathcal{C}$, which is $\mathcal{L}_{\mathcal{C}}(\mu)$. 
\citet[][Theorem 3.3]{Bartlett:2003} show that $\mathcal{L}_{\psi}\to 0$ implies $\mathcal{L}_{\mathcal{C}}\to 0$ as $n\to \infty$. In other words, using logistic loss eventually results in the optimal 0-1 (Bayes) loss, and so nothing is lost in using logistic loss as a proxy for 0-1 loss.

Prediction has been shown to be accurate using Assumption \ref{ass:sparsity-ell1}. Suppose that the regularisation parameter $\lambda$ is of order $O(\sqrt{\log p/n})$, then the prediction loss is bounded above \citep{Buhlmann:2011}
\begin{align}\label{eq:pred-loss-const}
\mathcal{L}_{\psi}(\hat{\mu}) + \lambda||\hat{\theta}||_{1}
\le 2\lambda ||\theta^{*}||_{1} 
\end{align}
If in addition Assumption \ref{ass:sparsity-ell1} holds, where $||\theta^{*}||_{1}$ is of order $o(\sqrt{n/\log p})$, then $\mathcal{L}_{\psi}(\hat{\mu})=o(1)$. This result is in the Appendix as Lemma \ref{lem:pred-loss-const} and corresponds to that in \citet{Ravikumar:2010}, but see also \citet[][Section 14.8]{Buhlmann:2011} for stronger results. The requirement that the regularisation parameter is of order $O(\sqrt{\log p/n})$ is obtained because the stochastic part in the prediction loss has to be negligible (see Lemma \ref{lem:emp-proc-bound} in the Appendix for details). If we choose $\lambda$ sufficiently large, we are guaranteed with probability at least $1-2\exp(-nt^{2})$ for some $t>0$ that the prediction loss is bounded by the $\ell_{1}$ norm of the parameter of interest $\theta^{*}$ as in (\ref{eq:pred-loss-const}). 

It follows directly from (\ref{eq:pred-loss-const}) that the lasso estimation error is larger than prediction loss, and so prediction is easier than estimation \citep[see also][]{Hastie:2001}. From (\ref{eq:pred-loss-const}) we get an upper bound on prediction loss
\begin{align}\label{eq:prediction-bound-estimation}
\mathcal{L}_{\psi}(\hat{\mu}) 
\le 2\lambda(||\hat{\theta}-\theta^{*}||_{1})
\end{align}
where we used the reverse triangle inequality (see Lemma \ref{lem:prediction-error-bound} in the Appendix for details). This shows that lasso estimation error is larger than prediction error.


\subsection{Restricted eigenvalues}\label{sec:restricted-eigenvalues}
Next to sparsity, the second assumption for the lasso is related to the problem that when $p>n$ the empirical risk $R_{n,\psi}$ is not strongly convex and hence no unique solution is available. It turns out that we need to consider a subset of lasso estimation errors $\delta=\hat{\theta}-\theta^{*}$ such that strong convexity holds for that subset \citep{Wainright:2012a}. 

Because we have $p>n$ we cannot obtain strong convexity in general, and we need to relax the assumption. This is how we get to the restricted eigenvalue assumption. Let $\nabla_{j}\psi(y_{i},x_{i}^{\sf T}\theta)$ be the first derivative with respect $\theta_{j}$ and $\nabla^{2}_{jj}\psi(y_{i},x_{i}^{\sf T}\theta)$ the second derivative with respect to $\theta_{j}$. Then demanding strong convexity means that, if we consider the $s_{0}\times s_{0}$  submatrix $\nabla^{2}_{S_{0}}\psi_{n}(\theta)$ then we need that $\nabla^{2}_{S_{0}}\psi_{n}(\theta)\ge \gamma I$, where $I$ is the identity matrix and we used $\psi(\theta)$ instead of $\psi(y,\mu)$ to emphasise dependence on $\theta$ (and $\mu=x^{\sf T}\theta$).  This we can never get (see the Appendix for more details on strong convexity).
But from (\ref{eq:pred-loss-const}) it follows that if the directions of the lasso error $\delta=\hat{\theta}-\theta^{*}$  follow a cone shaped region with $||\delta_{S_{0}^{c}}||_{1}\le \alpha||\delta_{S_{0}}||_{1}$ (see Theorem \ref{thm:est-err} in the Appendix), then within these directions strong convexity holds.  We refer to this set as $\mathbb{C}_{\alpha}=\{\delta\in\mathbb{R}^{p}:||\delta_{S_{0}^{c}}||_{1}\le \alpha||\delta_{S_{0}}||_{1}\}$. In the directions where the cone shape holds so that $\delta\in\mathbb{C}_{\alpha}$, the loss function is strictly larger than 0, except at $\delta=0$, but is flat and can be 0 if $\delta\notin\mathbb{C}_{\alpha}$ (see \citet{Wainright:2012a} or \citet{Hastie:2015} for an excellent discussion).  This assumption is called the restricted eigenvalue assumption. 

The second derivative or Fisher information matrix is 
\begin{align}\label{eq:second-deriv}
\nabla^{2}\psi(\theta)=\frac{1}{n}\sum_{i=1}^{n}\E \pi(\mu_{i})\pi(-\mu_{i})x_{i}x_{i}^{\sf T}.
\end{align}
We assume that this population level matrix is positive definite. Then by strong convexity we have for $\gamma>0$ that $\nabla^{2}\psi\ge \gamma I$, and so 
\begin{align*}
\mathcal{L}_{\psi}(\hat{\mu})\ge \frac{1}{2}\delta^{\sf T}\nabla^{2}\psi(\hat{\theta})\delta\ge\frac{\gamma}{2}||\delta||_{2}^{2}
\end{align*}
which allows us to relate the lasso estimation error to prediction loss such that  we can conclude consistency because of the bound on prediction error in (\ref{eq:pred-loss-const}) (see Lemma \ref{lem:pred-loss-const} in the Appendix). 
The problem is that we work with the empirical $p\times p$ matrix $\nabla^{2}\psi_{n}(\theta)$ which is necessarily singular since $p>n$. 
The empirical Fisher information is
\begin{align}\label{eq:second-deriv-emp}
\nabla^{2}\psi_{n}(\theta)=\frac{1}{n}\sum_{i=1}^{n}\pi(\mu_{i})\pi(-\mu_{i})x_{i}x_{i}^{\sf T}
\end{align}
which has zero eigenvalues because it is positive semidefinite whenever $p>n$. \citet{Bickel:2009} suggested the restricted eigenvalue assumption that is sufficient to guarantee that $\nabla^{2}\psi_{n}(\theta)$ has positive eigenvalues for lasso errors $\delta\in\mathbb{C}_{\alpha}$. Here we require in the setting of nodewise logistic regression that for all nodes $s$ simultaneously the lower bound $\gamma_{G}>0$ is sufficient and $\alpha=1$. We emphasise the nodewise estimation of all edges in $E$ by using $\psi_{s}$ and $\delta_{s}$.
\begin{assumption}\label{ass:RE}
(Restricted eigenvalue) 
The population Fisher information matrix $\nabla^{2}\psi_{s}$ of dimensions $p\times p$ is nonsingular and $\max_{j}\nabla^{2}_{jj}\psi_{s}(\theta)<K$, for some $K>0$ and for all $s\in V$. The empirical matrix $\nabla^{2}\psi_{n,s}(\theta)$ satisfies the restricted eigenvalue (RE) assumption if for some $\gamma_{G}>0$ it holds that 
\begin{align}
\min_{s\in V}\frac{\delta_{s}^{\sf T}\nabla^{2}\psi_{n,s}(\theta)\delta_{s}}{||\delta_{s}||_{2}^{2}} \ge\gamma_{G}\qquad \text{for all } 0\ne\delta_{s}\in\mathbb{C}_{1}. 
\end{align}
\end{assumption}
The restricted eigenvalue assumption has been investigated in the context of Gaussian data \citep[][chapter 11]{Bickel:2009,Wainwright:2009,Raskutti:2010,Hastie:2015}, in the setting of the Ising model \citep[][Lemma 3]{Ravikumar:2010}, and in generalised linear models \citep[][chapter 6]{Geer:2008,Buhlmann:2011}. The original restricted eigenvalue assumption as presented in \citet{Bickel:2009} is slightly stronger than the compatibility assumption of \citet{Geer:2009}.  See \citet{Geer:2009} for more details on the compatibility and other assumptions to bound estimation error in the lasso. Here we use the RE assumption because of its connection to multicollinearity, discussed in Section \ref{sec:violations}.

Let $\theta_{S}$ be the vector where for each $t\in V$ we have $\theta_{t}\mathbbm{1}\{t\in S\}$. It follows that $\theta=\theta_{S}+\theta_{S^{c}}$, where $S^{c}$ is the complement of $S$. 
The RE assumption implies that the $s_{0}\times s_{0}$ submatrix $\nabla^{2}_{S_{0}}\psi_{n}(\theta)$ indexed by $S_{0}$ has smallest eigenvalue $>0$. This can be seen as follows. RE implies that there is a $\delta$ such that $\delta_{S_{0}}\ne0$, $\delta_{S_{0}^{c}}=0$, implying $||\delta_{S_{0}^{c}}||_{1}\le ||\delta_{S_{0}}||_{1}$, and $\delta^{\sf T}(\nabla^{2}\psi_{n})\delta>0$. This implies that for some $\gamma_{G}>0$
\begin{align*}
\nabla^{2}_{S_{0}}\psi_{n}(\theta)\ge\gamma_{G} I
\end{align*}
and so we have restricted strong convexity for this $\delta\in \mathbb{C}_{1}$. The two Assumptions \ref{ass:sparsity} on coordinate sparsity and \ref{ass:RE} on restricted eigenvalues make it possible to derive the $\ell_{1}$ estimation error bound (see Theorem \ref{thm:est-err} in the Appendix for details)
\begin{align}\label{eq:loss-est}
\max_{s\in V}||\delta_{s}||_{1}=\max_{s\in V}||\hat{\theta}_{s}-\theta_{s}^{*}||_{1}
\le 
\frac{16}{\gamma_{G}}s_{0}\lambda
\end{align}
The bound corresponds to the one given in \citet[][Corollary 2, discussed in Section 4.4]{Wainright:2012a}, and the one in \citet[][Lemma 6.8]{Buhlmann:2011}. Because we require the smallest $\gamma$ such that the RE assumption holds, we have that this bound holds simultaneously for all nodes in the Ising graph. 

The bounds on prediction and estimation are important to know the circumstances for the statistical guarantees. However, in many practical situations we cannot be certain of the assumptions of sparsity and restricted eigenvalues. These assumption cannot be checked. And so it becomes relevant to know what the consequences for prediction and estimation are when the assumptions are not satisfied. This is what we investigate next. 

\section{Violation of sparsity and restricted eigenvalues}\label{sec:violations}
If we violate either the sparsity or restricted eigenvalue assumption, then we would expect that lasso estimation error becomes worse, and indeed this happens. However, this is not so clear for prediction. In fact, it turns out that prediction becomes better for non-sparse models that violate the restricted eigenvalue (RE) assumption.
Our main result is that violating the RE or sparsity assumption leads to a decrease in empirical risk, and hence in loss. The RE assumption is violated by an extreme case of multicollinearity, namely where some nodes are copies of other nodes. When such copies are connected we call them connected copies. In connected copies the coefficients are proportional to the original ones, such that we do not arbitrarily change the data generating process. One way to view connected copies is to find multiplicative constants for the edge weights that lead to a network with perfect correlations between nodes. We therefore compare prediction and estimation of different situations where we violate the RE or sparsity assumption based on different data generating processes. Proposition \ref{prop:collinearity} shows that the number of connected copies co-determines the decrease in empirical risk, and hence, violating the RE assumption leads to a decrease in risk. Next, in Corollary \ref{cor:sparsity} we show that violating the sparsity assumption leads to either a decrease or increase of empirical risk depending on whether the set of coefficients in the different subsets of nodes is positive or negative, respectively. We illustrate the theoretical results with some simulations in Section \ref{sec:numerical}.

\subsection{Connected copies}\label{sec:con-copies}
Suppose that for some nodes $s,t\in V$ we have that the observations are identical, that is, $x_{i,s}=x_{i,t}$ for all $i$. Then the coefficients obtained with the lasso using the quadratic approximation to the logistic loss in coordinate descent will be identical, i.e., $\hat{\theta}_{s}=\hat{\theta}_{t}$ \citep[][see also the Appendix for a discussion of the coordinate descent algorithm]{Hastie:2015}. This can be seen from the following considerations. By (\ref{eq:second-deriv-emp}) we have that element $(s,s)$ of the second derivative matrix is
\begin{align*}
\nabla^{2}_{ss}\psi_{n}=\frac{1}{n}\sum_{i=1}^{n}\pi(\mu_{i})\pi(-\mu_{i})x_{i,s}^{2}
\end{align*}
and this is the same for element $(t,t)$ since $x_{s}=x_{t}$.  Similarly for  the $s$th element $\nabla_{s}\psi_{n}$  we obtain 
\begin{align*}
\nabla_{s}\psi_{n} = \frac{1}{n}\sum_{i=1}^{n}(-y_{i}+\pi(\mu_{i}))x_{i,s}
\end{align*}
which equals $\nabla_{t}\psi_{n}$ because $x_{s}=x_{t}$. In the coordinate descent algorithm the updating scheme using the quadratic approximation (see the Appendix for details) is at time $q+1$
\begin{align}\label{eq:coord-descent}
\theta^{q+1}_{j} = \theta^{q}_{j} -
\begin{cases}
(\nabla^{2}_{jj}\psi^{q})^{-1}\nabla_{j}\psi^{q} -\lambda
	&\text{if } (\nabla^{2}_{jj}\psi^{q})^{-1}\nabla_{j}\psi^{q} \hspace{.5em}>\lambda\\
(\nabla^{2}_{jj}\psi^{q})^{-1}\nabla_{j}\psi^{q} +\lambda
	&\text{if } (\nabla^{2}_{jj}\psi^{q})^{-1}\nabla_{j}\psi^{q} \hspace{.5em}<-\lambda\\
0
	&\text{if } |(\nabla^{2}_{jj}\psi^{q})^{-1}\nabla_{j}\psi^{q}| \le\lambda.	
\end{cases}
\end{align}
where $(\nabla^{2}_{jj}\psi^{q})^{-1}$ is element $(j,j)$ of the inverse of the second order derivative matrix $\nabla^{2}\psi^{q}$ for step $q$ in the coordinate descent algortihm. Then we obtain in the coordinate descent algorithm $(\nabla^{2}_{ss}\psi^{q}_{n})^{-1}\nabla_{s}\psi^{q}_{n}$ at each step $q$ for both nodes $s$ and $t$, implying that the coefficients are the same. 
So for each node in the nodewise regressions we obtain a Fisher matrix where column $s$ is the same as column $t$. Now if both $s$ and $t$ are in $S_{0}$, then the smallest eigenvalue of $\nabla^{2}\psi_{n,S_{0}}$ is 0, and hence, the RE assumption is violated. We will use this idea of identical nodes to explain why prediction loss becomes better when we violate the RE assumption.

We call a node $t$ in the subset $L\subset V$ a connected copy of $s\in K=V\backslash L$ if $(s,t)\in E$ and $x_{t}=x_{s}$. This says that two directly connected nodes are identical to each other for all $n$ observations. Note that the coefficient between a connected copy and its original must be positive; if the coefficient were negative, then the connected copy would also have to be the reverse of its original, which cannot be true because the variables are defined to be identical. We know from estimation that if a node is a connected copy then the lasso solution is no longer unique \citep{Hastie:2015}. In fact, if $t$ is a connected copy of $s$, then all solutions with $\alpha\hat{\theta}_{s}$ and $(1-\alpha)\hat{\theta}_{t}$, with $0\le \alpha\le 1$ and $\hat{\theta}_{s}$, $\hat{\theta}_{t}$ are estimates of the parameters of nodes $s$ and $t$ respectively, result in identical empirical risk $R_{n,\psi}$ as when those connected copies have been deleted. Similarly, we will have the same $\ell_{1}$ norm as when the connected copies have been deleted. As a consequence, we cannot distinguish between the situation with or without the connected copy in $\ell_{1}$ optimisation. We denote by $L_{t}$ the set of all connected copies $s\in L_{t}$ of $t\in K$, which defines an equivalence relation on $L$, such that $L_{t}\cap L_{s}=\varnothing$ and $\cup L_{t}=L$. We denote the parameter vector where the connected copies in $L$ have been deleted by $\theta_{\backslash L}$ and correspondingly $\mu_{\backslash L}=x^{\sf T}_{\backslash L}\theta_{\backslash L}$. 
\begin{lemma}\label{lem:copy}
In the Ising graph $G=(V,E)$ suppose nodes in $L\subset V$ are connected copies of nodes in $K=V\backslash L$. Furthermore,  the nodewise lasso solutions $\hat{\theta}$ are obtained with (\ref{eq:lasso}) where for each connected copy $t\in L_{t}$ of node $s\in  K$, with $\alpha_{t}\hat{\theta}_{t}$, we have that $\sum_{t\in L_{t}}\alpha_{t}=1$. Then the empirical risk $R_{n,\psi}(\hat{\mu})$ and $\ell_{1}$ norm of $\hat{\theta}$ are the same as when the connected copies in $L$ are deleted, i.e., $R_{n,\psi}(\hat{\mu})=R_{n,\psi}(\hat{\mu}_{\backslash L})$ and $||\hat{\theta}||_{1}=||\hat{\theta}_{\backslash L}||_{1}$. 
\end{lemma}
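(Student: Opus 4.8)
The plan is to reduce everything to the single structural fact behind the hypothesis: if $t\in L$ is a connected copy of $s\in K$ then the design columns are literally equal, $x_{i,t}=x_{i,s}$ for all $i$, so the linear predictor $x_{i}^{\sf T}\theta$ depends on $\theta$ only through the sum of the coefficients over each equivalence block. Write $C_{s}=\{s\}\cup L_{s}$ for the block of a representative $s\in K$ together with all of its copies, so that $\{C_{s}:s\in K\}$ partitions $V$ and $x_{i,j}=x_{i,s}$ whenever $j\in C_{s}$, and define the collapse map $\Pi:\mathbb{R}^{p}\to\mathbb{R}^{|K|}$ by $(\Pi\theta)_{s}=\sum_{j\in C_{s}}\theta_{j}$. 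The first step is the elementary identity $x_{i}^{\sf T}\theta=\sum_{s\in K}x_{i,s}\sum_{j\in C_{s}}\theta_{j}=x_{i,\backslash L}^{\sf T}(\Pi\theta)$, valid for every $i$, so that $\mu_{i,\theta}=\mu_{i,\backslash L}(\Pi\theta)$ pointwise; the logistic losses $\psi(y_{i},\mu_{i})$ then agree termwise and hence $R_{n,\psi}(\mu_{\theta})=R_{n,\psi}(\mu_{\backslash L}(\Pi\theta))$ exactly, with no inequality.

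Second I would record the two $\ell_{1}$ facts attached to $\Pi$. The triangle inequality gives $\|\Pi\theta\|_{1}=\sum_{s\in K}\bigl|\sum_{j\in C_{s}}\theta_{j}\bigr|\le\|\theta\|_{1}$ for every $\theta$. Conversely, for any $\eta\in\mathbb{R}^{|K|}$ and any weights $\alpha_{j}\ge0$ with $\sum_{j\in C_{s}}\alpha_{j}=1$ for each $s$, the lift $\theta$ defined by $\theta_{j}=\alpha_{j}\eta_{s}$ on $C_{s}$ obeys $\Pi\theta=\eta$ and, because the $\alpha_{j}$ are nonnegative, $\|\theta\|_{1}=\sum_{s\in K}|\eta_{s}|\sum_{j\in C_{s}}\alpha_{j}=\|\eta\|_{1}$. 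The nonnegativity of the split weights is exactly the content of the hypothesis ($0\le\alpha_{t}\le1$, $\sum_{t\in L_{t}}\alpha_{t}=1$) and it is consistent with the sign remark preceding the lemma; it is also what the KKT conditions for (\ref{eq:lasso}) force, since $\nabla_{j}\psi_{n}(\hat\theta)$ is the same for every $j\in C_{s}$ (equal columns), so the subgradient conditions pin down only the block-sum $(\Pi\hat\theta)_{s}$ and leave the split free among allocations of a common sign --- which is precisely the coordinate-descent behaviour derived from (\ref{eq:coord-descent}) just before the lemma.

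Third I would tie the pieces together by an objective comparison. Let $F(\theta)=R_{n,\psi}(\mu_{\theta})+\lambda\|\theta\|_{1}$ and let $F_{\backslash L}$ be the same objective for the graph with $L$ removed. Step~1 with the triangle bound gives $F_{\backslash L}(\Pi\theta)\le F(\theta)$ for all $\theta$; Step~1 together with an equal-norm lift gives $F(\theta)=F_{\backslash L}(\eta)$ when $\theta$ is a lift of $\eta$. Hence $\min F=\min F_{\backslash L}$, and for any minimiser $\hat\theta$ of $F$ the chain $F(\hat\theta)\ge F_{\backslash L}(\Pi\hat\theta)\ge\min F_{\backslash L}=\min F=F(\hat\theta)$ is a chain of equalities, so $\|\hat\theta\|_{1}=\|\Pi\hat\theta\|_{1}$ and $\Pi\hat\theta$ minimises $F_{\backslash L}$. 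Identifying this reduced minimiser with $\hat\theta_{\backslash L}$ (the lasso fit on $G$ with $L$ deleted --- deleting a copy node does not change the neighbourhood structure being estimated, so this is legitimate), Step~1 yields $R_{n,\psi}(\hat\mu)=R_{n,\psi}(\hat\mu_{\backslash L})$ and the computation above yields $\|\hat\theta\|_{1}=\|\hat\theta_{\backslash L}\|_{1}$; the hypothesis on the $\alpha_{t}$ is then just the explicit description $\hat\theta_{j}=\alpha_{j}(\hat\theta_{\backslash L})_{s}$ of how coordinate descent distributes mass within each block.

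I expect the only delicate point to be the bookkeeping just indicated: promoting the qualitative statement ``coordinate descent assigns equal coefficients to identical columns'' into the precise claim that collapsing by $\Pi$ and re-lifting with nonnegative, block-summing weights recovers the entire solution set --- not merely one symmetric representative --- and in particular that the block-sums $(\Pi\hat\theta)_{s}$ are the same for every minimiser. The KKT/subgradient observation of the second paragraph, combined with convexity of $F$, is what settles this. Once it is in place, the two stated identities are immediate from the linear identity of Step~1 and the triangle inequality of Step~2.
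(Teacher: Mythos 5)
Your proof is correct, and its core computation is the same as the paper's: because copies share identical columns, the linear predictor depends on $\theta$ only through block sums, and a nonnegative split with weights summing to one leaves the $\ell_{1}$ norm unchanged. Where you differ is in the packaging. The paper argues directly on the solution itself: it invokes the coordinate-descent discussion preceding the lemma to assert $\hat{\theta}_{s}=\hat{\theta}_{t}$ for identical columns, then verifies termwise that reallocating the coefficient with weights $\alpha_{t}$ reproduces exactly the $\mu_{i}$ and the $\ell_{1}$ norm of the deleted-copy fit, first for a single copy and then for a block $L_{t}$. You instead introduce the collapse map $\Pi$, prove the two-sided relation $F_{\backslash L}(\Pi\theta)\le F(\theta)$ with equality on nonnegative lifts, and deduce $\min F=\min F_{\backslash L}$ together with the fact that $\Pi\hat{\theta}$ minimises the reduced objective with $\|\Pi\hat{\theta}\|_{1}=\|\hat{\theta}\|_{1}$. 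This variational route buys something the paper leaves implicit: it identifies the full and reduced lasso problems without presupposing that the fitted $\hat{\theta}$ has the equal-coefficient (or weighted-split) form, and it handles non-uniqueness cleanly, since the chain of equalities pins down the risk and norm values of any minimiser. One small remark: your ``delicate point'' about block sums being identical across all minimisers is stronger than needed and not guaranteed in general (only $X_{\backslash L}\Pi\hat{\theta}$ is unique); but your argument never actually uses it, because equality of the objective values already forces equality of the risk and of the $\ell_{1}$ norm across the solution set, which is all the lemma asserts.
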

So we have that the non-uniqueness of the lasso in case of a connected copy, results in the exact same value for the empirical risk whether we delete it or take any one of the weighted versions such that the coefficients sum to 1. Note that we do not change the underlying process in any arbitrary way; the nodes are connected and the coefficients remain proportional to the original ones. We immediately obtain that the size $|L|$ of the set of connected copies co-determines the prediction loss. We obtain this result because the coefficients of the connected copies with respect to their originals is positive. 
\begin{proposition}\label{prop:collinearity}
For the Ising graph, let $L_{1}$ and $L_{2}$ be subsets of connected copies of nodes in $V\backslash L_{1}\cup L_{2}$ such that $L_{1}\subset L_{2}$ and hence $|L_{1}|< |L_{2}|$. Then we have for the prediction loss that the sum of coefficients in $L_{1}^{c}\cap L_{2}$ is $>0$, and the risk $R_{n,\psi}(\hat{\mu}_{\backslash L_{1}})\ge R_{n,\psi}(\hat{\mu}_{\backslash L_{2}})$.
\end{proposition}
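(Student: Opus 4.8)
The plan is to derive both assertions from Lemma~\ref{lem:copy} together with the structural remark made just before it, that the interaction between a connected copy and its original is necessarily positive. For the statement on the coefficients: since $L_{1}\subset L_{2}$ with $|L_{1}|<|L_{2}|$, the set $M:=L_{1}^{c}\cap L_{2}=L_{2}\backslash L_{1}$ is nonempty, and by hypothesis each $t\in M$ is a connected copy of some core node $s\in V\backslash(L_{1}\cup L_{2})$, so that $(s,t)\in E$ and $x_{t}=x_{s}$. A negative interaction $A_{st}$ between $t$ and $s$ is incompatible with $x_{t}=x_{s}$, since it would force the copy to be the reverse of its original; hence each such coefficient, and by the coordinate-descent discussion of Section~\ref{sec:con-copies} each corresponding nodewise estimate, is strictly positive. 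Summing over the finitely many $t\in M$ shows that the sum of these coefficients is $>0$.

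For the inequality on the risk, the key point is that passing from $\hat{\mu}_{\backslash L_{1}}$ to $\hat{\mu}_{\backslash L_{2}}$ deletes exactly the connected copies in $M$. View $\hat{\mu}_{\backslash L_{1}}$ as a nodewise lasso solution on the graph obtained from $G$ by removing $L_{1}$; there the nodes of $M$ are still connected copies of nodes in $V\backslash L_{2}$ and partition into the classes $L_{t}\cap M$, within each of which the weights $\alpha_{t}$ can be taken to sum to one (the leftover mass going to the original $s$). Lemma~\ref{lem:copy} applied with $L=M$ in that graph then shows that deleting $M$ preserves both the empirical risk and the $\ell_{1}$ norm; since the resulting vector attains the minimal penalised objective of the lasso on $V\backslash L_{2}$, it is a minimiser there and hence shares the empirical risk of $\hat{\mu}_{\backslash L_{2}}$ (all lasso minimisers of a convex problem have the same risk and the same $\ell_{1}$ norm). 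Therefore $R_{n,\psi}(\hat{\mu}_{\backslash L_{1}})=R_{n,\psi}(\hat{\mu}_{\backslash L_{2}})$, and in particular the asserted $\ge$. Intuitively: because $x_{t}=x_{s}$ for a copy, moving its mass onto $s$ leaves the linear predictor $\mu=x^{\sf T}\theta$ --- hence the loss $\psi$ at every observation --- unchanged, while positivity of the copy--original coefficients prevents any sign cancellation from inflating the $\ell_{1}$ norm; so the empirical risk cannot increase when more connected copies are present. (For strong but inexact multicollinearity one would expect strict decrease, which is why the statement is phrased with $\ge$.)

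The step I expect to need the most care is bookkeeping rather than a new idea: one must verify that the hypotheses of Lemma~\ref{lem:copy} still hold when it is invoked inside the graph already reduced by $L_{1}$ --- that $M$ partitions into connected-copy classes over the same core $V\backslash L_{2}$, and that the $\alpha_{t}$ can be chosen class-by-class with unit sum --- and one must invoke the (standard) fact that all lasso minimisers of a convex problem agree in risk and in $\ell_{1}$ norm. One should also be explicit about the degenerate role a copy $t$ of $s$ plays in the nodewise regression for $s$ itself, where $x_{t}=x_{s}$ coincides with the response; this is harmless because $A_{st}x_{s}x_{t}=A_{st}x_{s}$ when $x_{t}=x_{s}$ and is therefore absorbed into the external field $m_{s}$, but it should be acknowledged rather than glossed over.
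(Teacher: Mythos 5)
Your first claim is fine and matches the paper: each node of $L_{1}^{c}\cap L_{2}$ is a connected copy of a retained node, a copy--original coefficient cannot be negative, and $y_{i},x_{i,j}\in\{0,1\}$, so $\sum_{j\in L_{1}^{c}\cap L_{2}}x_{i,j}\theta_{j}>0$. The gap is in the second half. You read $\hat{\mu}_{\backslash L_{1}}$ and $\hat{\mu}_{\backslash L_{2}}$ as two lasso refits of the \emph{same} data with nested sets of columns removed, and then conclude via Lemma \ref{lem:copy} plus uniqueness of the lasso optimum value that the two risks are \emph{equal}, so that ``$\ge$'' holds trivially. That is not the statement the proposition is making, and it is not how the paper proves it. The proposition compares predictors arising from different generating processes (this is stressed in Section \ref{sec:violations}); concretely, the paper's proof starts from the relation $\mu_{i,\backslash L_{2}}=\mu_{i,\backslash L_{1}}+\sum_{j\in L_{1}^{c}\cap L_{2}}x_{i,j}\theta_{j}$, i.e.\ the predictor in the scenario with the larger copy set carries the \emph{additional positive contribution} of the extra copies rather than coinciding with a column-deleted refit. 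It then compares the losses pointwise, writing $\psi(y_{i},\mu_{i,\backslash L_{1}})-\psi(y_{i},\mu_{i,\backslash L_{2}})=y_{i}(\mu_{i,\backslash L_{2}}-\mu_{i,\backslash L_{1}})+\log\bigl((1+\exp(\mu_{i,\backslash L_{1}}))/(1+\exp(\mu_{i,\backslash L_{2}}))\bigr)$, and uses $\log(1+\exp(a))\ge a$ together with the positivity of the added sum to conclude the termwise inequality and hence $R_{n,\psi}(\hat{\mu}_{\backslash L_{1}})\ge R_{n,\psi}(\hat{\mu}_{\backslash L_{2}})$.

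Under your reading the proposition would be contentless (always equality), which is incompatible with how it is used: it is invoked precisely to explain why the empirical risk \emph{decreases} as the number of connected copies grows, and the simulations in Section \ref{sec:numerical} show a genuine decrease of prediction loss with collinearity. So the missing ingredient in your argument is the actual comparison of logistic losses at two predictors that differ by the positive amount $\sum_{j\in L_{1}^{c}\cap L_{2}}x_{i,j}\theta_{j}$; the bookkeeping about applying Lemma \ref{lem:copy} inside the reduced graph and the uniqueness of lasso fitted values, while reasonable in itself, proves a different (and trivial) statement. Your closing remark about the degenerate role of a copy in the regression for its own original is a side issue the paper does not need for this proof.
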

This follows from Lemma \ref{lem:copy} directly, since there we saw that the prediction loss including connected copies is equal to the prediction error when those connected copies are deleted. This idea explains why the empirical risk decreases as a function of an increasing number of connected copies. 

The same idea can be used to determine why prediction becomes better for non-sparse sets. Proposition \ref{prop:collinearity} can be altered such that a similar result holds for sparsity, where we do not need the connected copies. The only requirement is that we know what the sum of the coefficients is that are in the larger set of connected nodes,  because the nodes need not be connected in this case. Let the $S_{a}$ be a set of nodes with a possibly non-sparse set of nonzero edges in the sense that $|S_{a}|>O(\sqrt{n/\log p})$. Suppose that $S_{0}\subset S_{a}$ so that $|S_{0}|<|S_{a}|$. 
\begin{corollary}\label{cor:sparsity}
In the Ising graph $G=(V,E)$ suppose that we have a particular, not necessarily sparse, node set with nonzero edges in $S_{a}$, and define the subset $S_{0}\subset S_{a}$. Then we obtain for the empirical risk $R_{n,\psi}$ that
\begin{itemize}
\item[(1)] if the sum of coefficients in $S_{0}^{c}\cap S_{a}$ is $>0$, then $R_{n,\psi}(\hat{\mu}_{\backslash S_{0}})\ge R_{n,\psi}(\hat{\mu}_{\backslash S_{a}})$;
\item[(2)] if the sum of coefficients in $S_{0}^{c}\cap S_{a}$ is $<0$, then $R_{n,\psi}(\hat{\mu}_{\backslash S_{0}})\le R_{n,\psi}(\hat{\mu}_{\backslash S_{a}})$.
\end{itemize}

\end{corollary}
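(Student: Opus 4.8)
\emph{Proof proposal.} The plan is to reuse the decomposition behind Lemma~\ref{lem:copy} and Proposition~\ref{prop:collinearity}. The single new feature is that for a general (non-copy) node set the aggregated coefficient of the extra nodes need not be positive, so its sign has to be tracked, and this is exactly what splits the statement into the two cases.

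Write $T:=S_{0}^{c}\cap S_{a}=S_{a}\backslash S_{0}$ for the node set that is kept in $\hat{\mu}_{\backslash S_{0}}$ but removed in $\hat{\mu}_{\backslash S_{a}}$. Since $S_{0}\subset S_{a}$, deleting $S_{a}$ amounts to deleting $S_{0}$ and, in addition, zeroing the coefficients on $T$, so that for every observation $i$
\begin{align*}
\hat{\mu}_{i,\backslash S_{0}}-\hat{\mu}_{i,\backslash S_{a}}=\sum_{t\in T}\hat{\theta}_{t}\,x_{i,t}.
\end{align*}
Following the absorption device of Lemma~\ref{lem:copy} --- where a node whose observations coincide with those of another node enters the log-odds only through the \emph{sum} of the two coefficients --- the comparison is controlled by the aggregated coefficient $\beta:=\sum_{t\in T}\hat{\theta}_{t}$ acting on the common observation pattern $x_{\cdot,T}\in\{0,1\}^{n}$ of the nodes in $T$, so that $\hat{\mu}_{i,\backslash S_{0}}=\hat{\mu}_{i,\backslash S_{a}}+\beta\,x_{i,T}$; by hypothesis $\mathrm{sign}(\beta)$ is known, $>0$ in case (1) and $<0$ in case (2).

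The next step is to compare the two empirical risks through this single parameter. With logistic loss $\psi(y_{i},\mu_{i})=-y_{i}\mu_{i}+\log(1+e^{\mu_{i}})$, set $g(b):=R_{n,\psi}(\hat{\mu}_{\backslash S_{a}}+b\,x_{\cdot,T})$; then $g$ is convex and smooth, $g(0)=R_{n,\psi}(\hat{\mu}_{\backslash S_{a}})$, $g(\beta)=R_{n,\psi}(\hat{\mu}_{\backslash S_{0}})$, and $g'(0)=\tfrac1n\sum_{i}(\pi(\hat{\mu}_{i,\backslash S_{a}})-y_{i})x_{i,T}$. I would read the sign of $g(\beta)-g(0)$ off the first-order expansion $g(\beta)-g(0)=\beta\,g'(0)+(\text{nonnegative curvature remainder})$. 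The sign of $g'(0)$ --- equivalently of the score $\tfrac1n\sum_{i}(\pi(\hat{\mu}_{i})-y_{i})x_{i,t}$ --- is the quantity that must be tied to the data through the stationarity (KKT) conditions of the lasso~(\ref{eq:lasso}), which pin the score coordinate-by-coordinate ($=-\lambda\,\mathrm{sign}(\hat{\theta}_{t})$ on the active set, bounded by $\lambda$ in absolute value off it); combined with the fact, as in Lemma~\ref{lem:copy}, that within a copy (or aggregated) set the individual coefficients carry the sign of their sum, this determines whether $\beta\,g'(0)$ is positive or negative. Case (1) is then the instance $\beta>0$, which also recovers Proposition~\ref{prop:collinearity} because connected copies force every $\hat{\theta}_{t}>0$ and hence $\beta>0$; case (2) is the instance $\beta<0$, obtained by repeating the argument verbatim with all signs reversed.

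The main obstacle is precisely this sign bookkeeping: showing that the direction of the risk change is governed \emph{only} by $\mathrm{sign}(\beta)$, and not by the particular configuration of the responses $y_{i}$ or by the individual signs of the $\hat{\theta}_{t}$, $t\in T$. In the connected-copy setting of Lemma~\ref{lem:copy} this is automatic --- the $T$-contribution collapses onto one aggregated coordinate and all its coefficients are positive --- but for the general statement one must either restrict to the natural situation in which the nodes of $T$ share a common observation pattern (so that the reduction of Lemma~\ref{lem:copy} applies and $\beta$ is the only free parameter) or control $\tfrac1n\sum_{i}(\pi(\hat{\mu}_{i,\backslash S_{a}})-y_{i})x_{i,T}$ directly from the optimality conditions when the $\hat{\theta}_{t}$ have mixed signs. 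Once the sign of the first-order term is secured, the curvature remainder is harmless: it is nonnegative by convexity of $\psi$ and can only reinforce the conclusion. Everything else is the bookkeeping already carried out for Lemma~\ref{lem:copy} and Proposition~\ref{prop:collinearity}.
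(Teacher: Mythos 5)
Your opening decomposition, $\hat{\mu}_{i,\backslash S_{0}}=\hat{\mu}_{i,\backslash S_{a}}+\sum_{t\in S_{0}^{c}\cap S_{a}}\hat{\theta}_{t}x_{i,t}$, is the same one the paper uses, but from there the paper goes a much more elementary way and your route has genuine gaps. The paper's proof of Corollary \ref{cor:sparsity} simply repeats the per-observation computation of Proposition \ref{prop:collinearity}: write $\psi(y_{i},\mu_{i,\backslash S_{0}})-\psi(y_{i},\mu_{i,\backslash S_{a}})=y_{i}(\mu_{i,\backslash S_{a}}-\mu_{i,\backslash S_{0}})+\log\bigl((1+\exp(\mu_{i,\backslash S_{0}}))/(1+\exp(\mu_{i,\backslash S_{a}}))\bigr)$, substitute the increment $\sum_{j\in S_{0}^{c}\cap S_{a}}x_{i,j}\theta_{j}$, and use elementary properties of $\log(1+\exp(\cdot))$ together with $y_{i},x_{i,j}\in\{0,1\}$; the hypothesis on the sign of the coefficient sum is used directly as the sign of this per-observation increment, case (2) is the same display with the sign reversed, and no convexity expansion, no KKT condition, and no aggregation onto a single coordinate appear anywhere.

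Measured against that, your proposal has three concrete problems. First, collapsing $T=S_{0}^{c}\cap S_{a}$ onto a common observation pattern $x_{\cdot,T}$ with one aggregated coefficient $\beta$ re-imports exactly the connected-copy structure that the corollary is meant to drop; you acknowledge this and leave the general (mixed-pattern, mixed-sign) case open, so the statement is not actually proved. Second, the KKT conditions of (\ref{eq:lasso}) constrain the score at the lasso fit $\hat{\mu}$, whereas $g'(0)=\frac{1}{n}\sum_{i}(\pi(\hat{\mu}_{i,\backslash S_{a}})-y_{i})x_{i,T}$ is the score at the truncated fit $\hat{\mu}_{\backslash S_{a}}$ (and neither endpoint of your segment is $\hat{\mu}$, since both have the $S_{0}$ coefficients removed), so stationarity does not pin down the sign you need; that sign is genuinely data-dependent, while the corollary's hypothesis only fixes the sign of $\sum_{t}\theta_{t}$. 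Third, treating case (2) ``verbatim with all signs reversed'' fails: convexity gives only the one-sided bound $g(\beta)-g(0)\ge\beta g'(0)$, with a remainder that is nonnegative irrespective of the sign of $\beta$, so a negative first-order term yields no upper bound on $g(\beta)-g(0)$; to conclude $g(\beta)\le g(0)$ you would have to expand around $\beta$ and control $g'(\beta)$, a different and equally unconstrained quantity. The paper's per-observation identity sidesteps all of this, which is precisely why it can handle both directions symmetrically in two lines.
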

We see that by eliminating the requirement of connectedness, we find that prediction loss decreases given that the coefficients in the remaining set of non-zero coefficients is positive.

We focus here on prediction loss because by (\ref{eq:prediction-bound-estimation}) we have that the $\ell_{1}$ estimation error is larger than prediction loss (given that the penalty parameter $\lambda$ is of the right order), and hence if we find that prediction loss becomes higher, it follows that $\ell_{1}$ estimation error becomes larger.

The above presented ideas of violating the sparsity assumption or restricted eigenvalue assumption are confirmed by some numerical illustrations.

\subsection{Numerical illustration}\label{sec:numerical}
To show the effects of non-sparse underlying representations and violation of  the restricted eigenvalue assumption  (multicollinearity), we performed some simulation studies. Here 0-1 data was generated by a Metropolis-Hastings algorithm, implemented in the {\small\sf R} package {\em IsingSampler} \citep{Borkulo:2014}, according to a random graph (Erd\"{o}s-Renyi) with $p=100$ nodes and $n=50$ observations. All edge coefficients were positive, so that we expect the prediction error to improve with increasing collinearity. Sparsity of the graph was varied by the probability of an edge from $p_{e}=0.025$, which complies with the sparsity assumption, to the probability of an edge of $p_{e}=0.2$, which does not comply with the sparsity assumption. For interpretation we defined sparsity in these simulations as $1-p_{e}$, so that high sparsity means few non-zero edges. Multicollinearity was induced by equating two columns of the data $X$ if there was an edge in the edge set of the true graph for a percentage $\alpha$, ranging from 0 to 0.6. This ensured that the smallest $\alpha s_{0}$ eigenvalues of the submatrix $\nabla^{2}\psi_{n,S_{0}}$ are 0, thereby violating the RE assumption. 

The parameters for the nodes $m$ and for interactions in $A$ were estimated by nodewise logistic regressions, implemented in {\em IsingFit} \citep{Borkulo:2014}. Here the extended Bayesian information criterion (EBIC) is used to determine the optimal $\lambda$ for each logistic regression separately \citep{Foygel:2013}. This procedure was run 100 times and the averages across these runs (and nodes) are presented. We evaluated estimation accuracy by recall ($|\hat{S}\cap S_{0}|/ |S_{0}|$) and precision ($|\hat{S}\cap S_{0}|/ |\hat{S}|$). We also used a scaled $\ell_{1}$ norm for the estimation error $||\delta||_{1}/u$, where $\delta=\hat{\theta}-\theta^{*}$ and $u$ is the maximal value obtained. Prediction was evaluated by  logistic loss $\psi$ and Bayes loss $\mathcal{C}$. We determine loss for data $z_{i}$ independent from data $y_{i}$, upon which the estimate $\hat{\theta}$ is based (predictive risk).

\begin{figure}[t!]
\begin{tabular}{c @{\hspace{2em}} c}
\includegraphics[width=5.8cm]{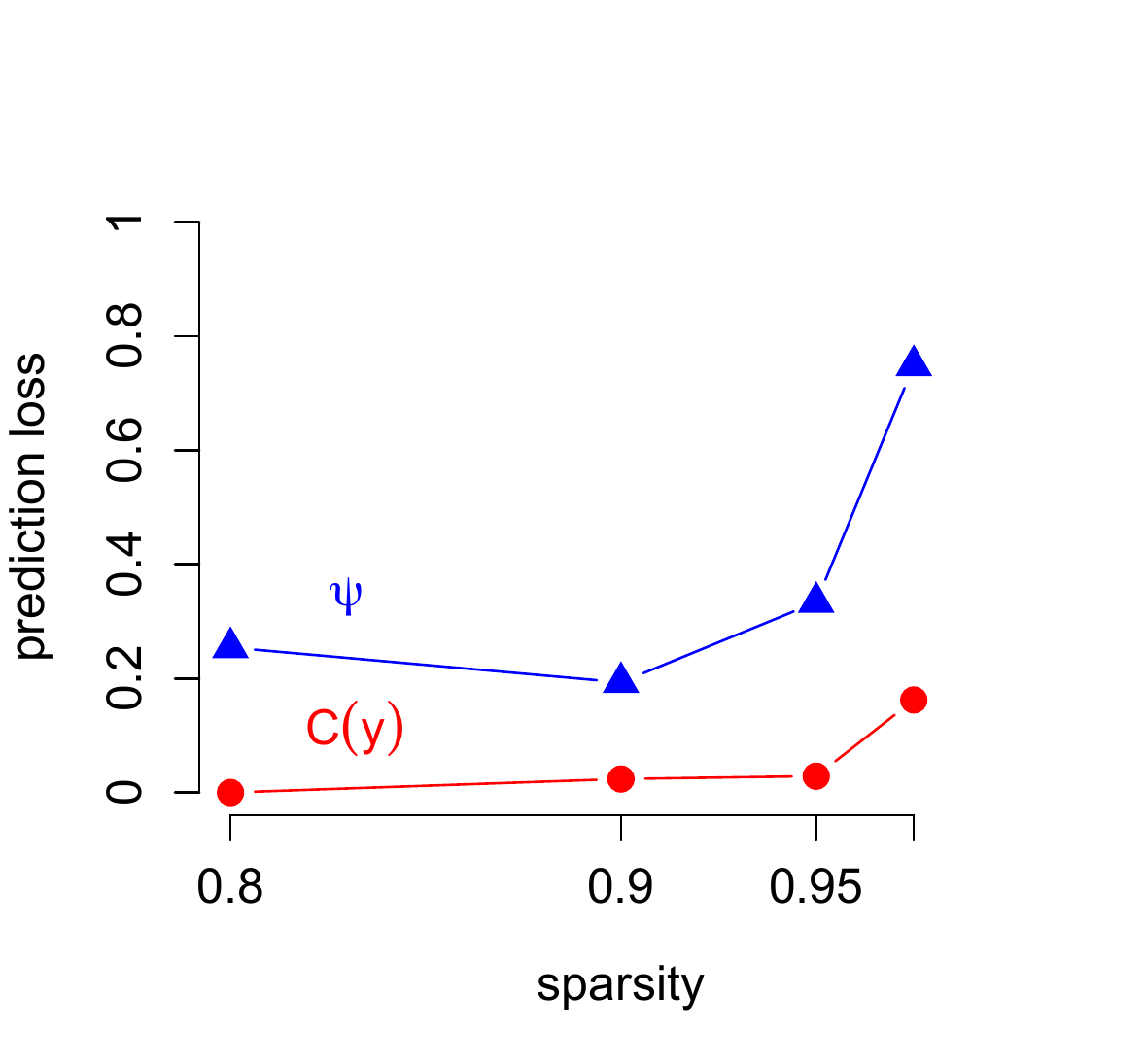}
&
\includegraphics[width=5.8cm]{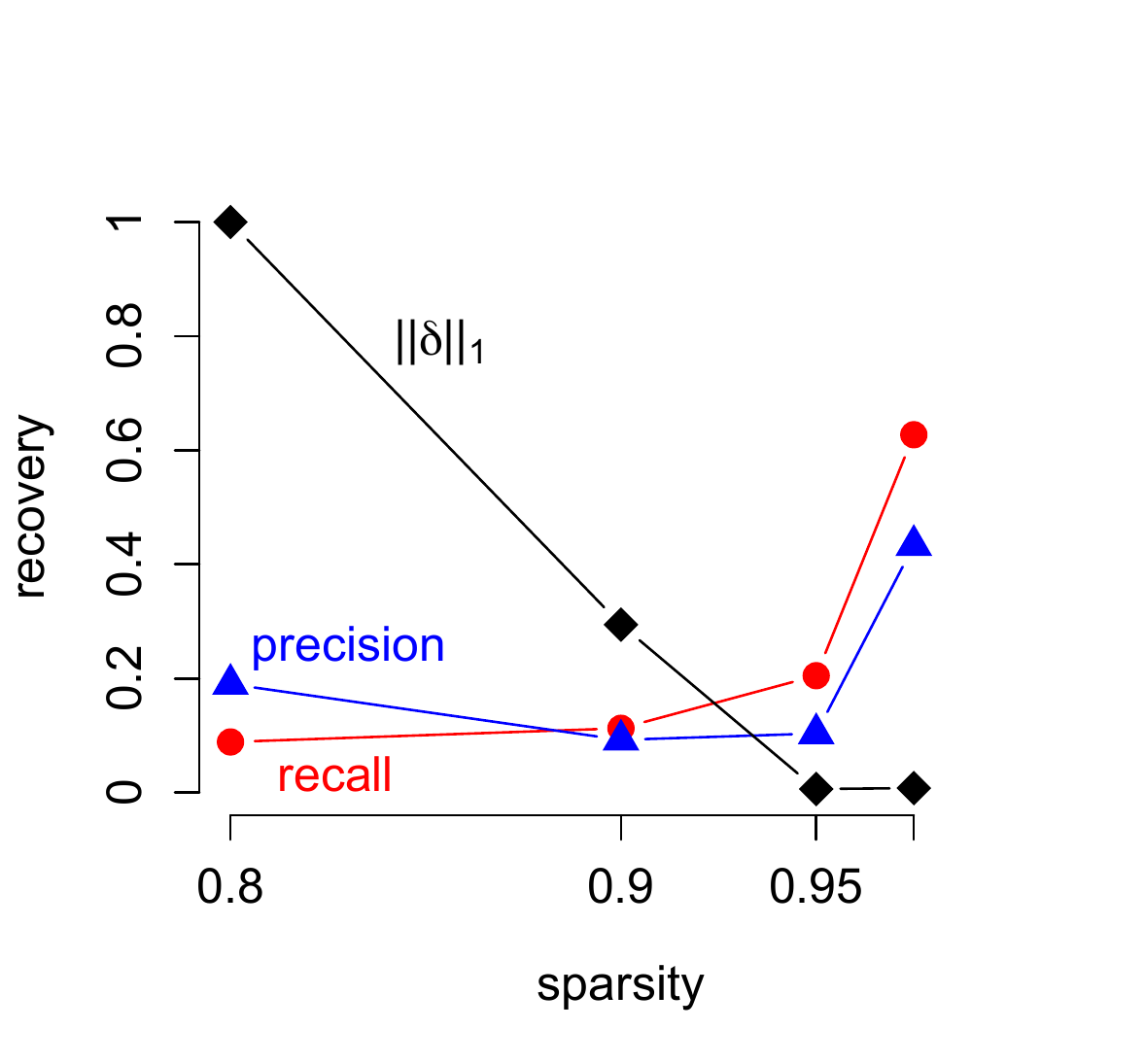}\\
(a)	&(b)
\end{tabular}
\caption{Performance measures of constructing networks with the lasso as a function of sparsity, where sparsity is defined as $1-p_{e}$, the reverse of the edge probability. In (a) Bayes loss (${\color{red}\bullet }$) and logistic loss (${\color{blue} \blacktriangle}$)  and in (b) recovery in terms of recall (${\color{red} \bullet}$) and precision  (${\color{blue} \blacktriangle}$) and the scaled $\ell_{1}$ norm of the error (${\color{black} \blacklozenge}$). }
\label{fig:sparsity}
\end{figure}

Figure \ref{fig:sparsity}(b) shows that recovery of parameters is accurate when sparsity is high (few non-zero edges), but recovery becomes poor when sparsity does not hold; from sparsity 0.95 and lower. This is seen in all three measures: recall, precision and the scaled $\ell_{1}$ norm. In contrast, the 0-1 loss from (\ref{eq:classifier}) and the logistic loss in (\ref{eq:psi-loss}) actually become better (the loss decreases) when the data generating process is no longer sparse, as can be seen in Figure \ref{fig:sparsity}(a). This corresponds to Corollary \ref{cor:sparsity}, which shows that sparsity is not necessary for accurate prediction. We do require that the penalty parameter $\lambda$ is of the appropriate order (i.e., $\lambda=O(\sqrt{\log p/n})$); here $\lambda$ was selected by the EBIC \citep{Foygel:2013} which ensured such a penalty. The EBIC has an additional hyperparameter $\gamma$ to control the impact of the size of the search domain; we set $\gamma$ to $0.25$ in line with the reasonable performance obtained in \citet{Foygel:2013}. Prediction loss is high at high sparsity because in the simulation there are only about 2 to 3 edges, which means that prediction of other nodes is extremely difficult. 

In Figure \ref{fig:collinearity} the results can be seen when multicollinearity is varied.  As expected, Figure \ref{fig:collinearity}(b) shows that increasing multicollinearity reduced recovery; both recall and precision decreased to around 10\%. Prediction loss, on the other hand, becomes smaller as shown in Figure \ref{fig:collinearity}(a), indicating better prediction for multicollinear data. This is in line with Proposition \ref{prop:collinearity}. We can also think of it in the following way. With increasing multicollinearity $\alpha$, more equal columns in $X$ are present for connected nodes. This leads to more similar behaviour of connected nodes in the Ising network and hence to better prediction. 

\begin{figure}[t!]
\begin{tabular}{c @{\hspace{2em}} c}
\includegraphics[width=5.8cm]{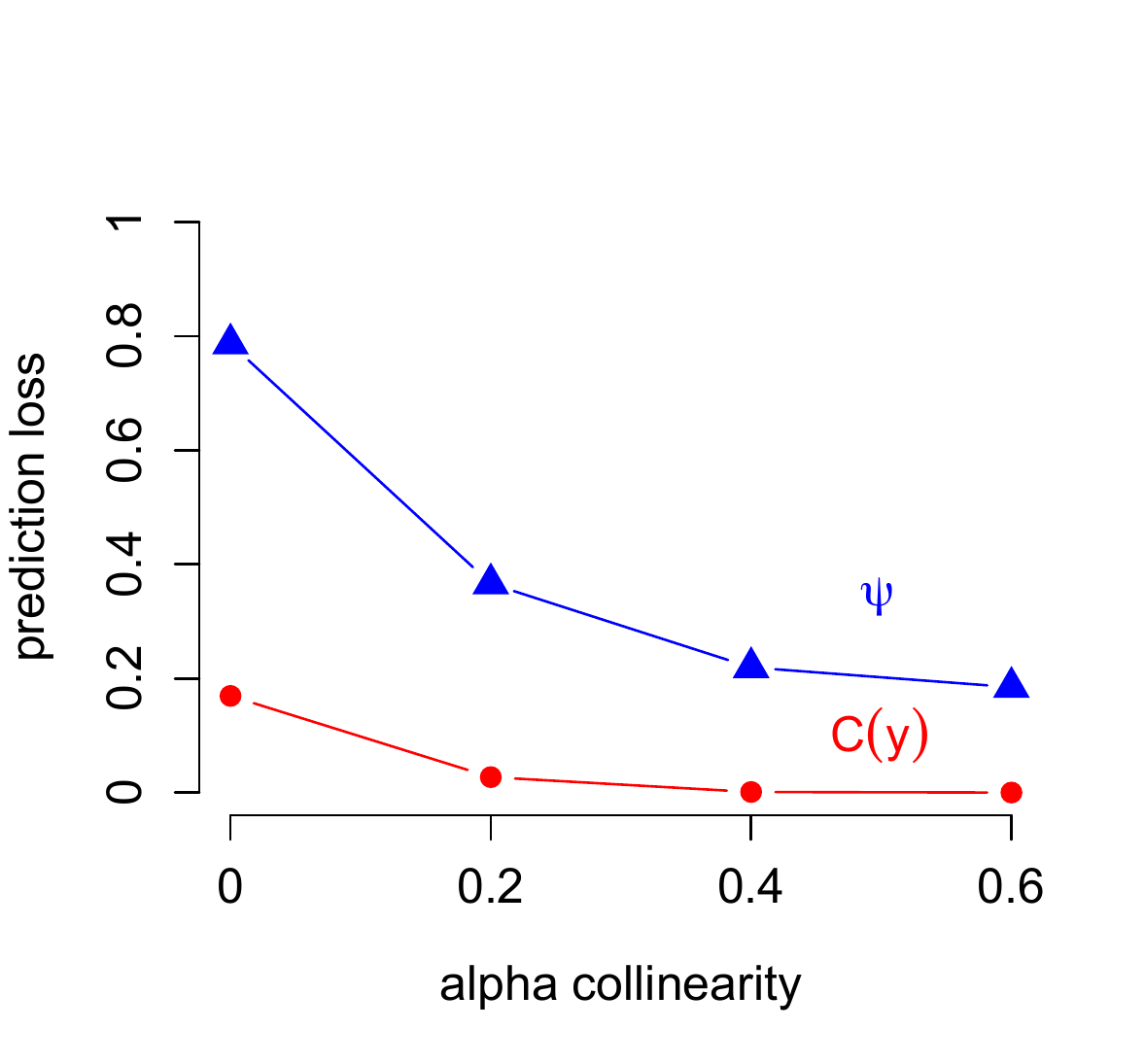}
&
\includegraphics[width=5.8cm]{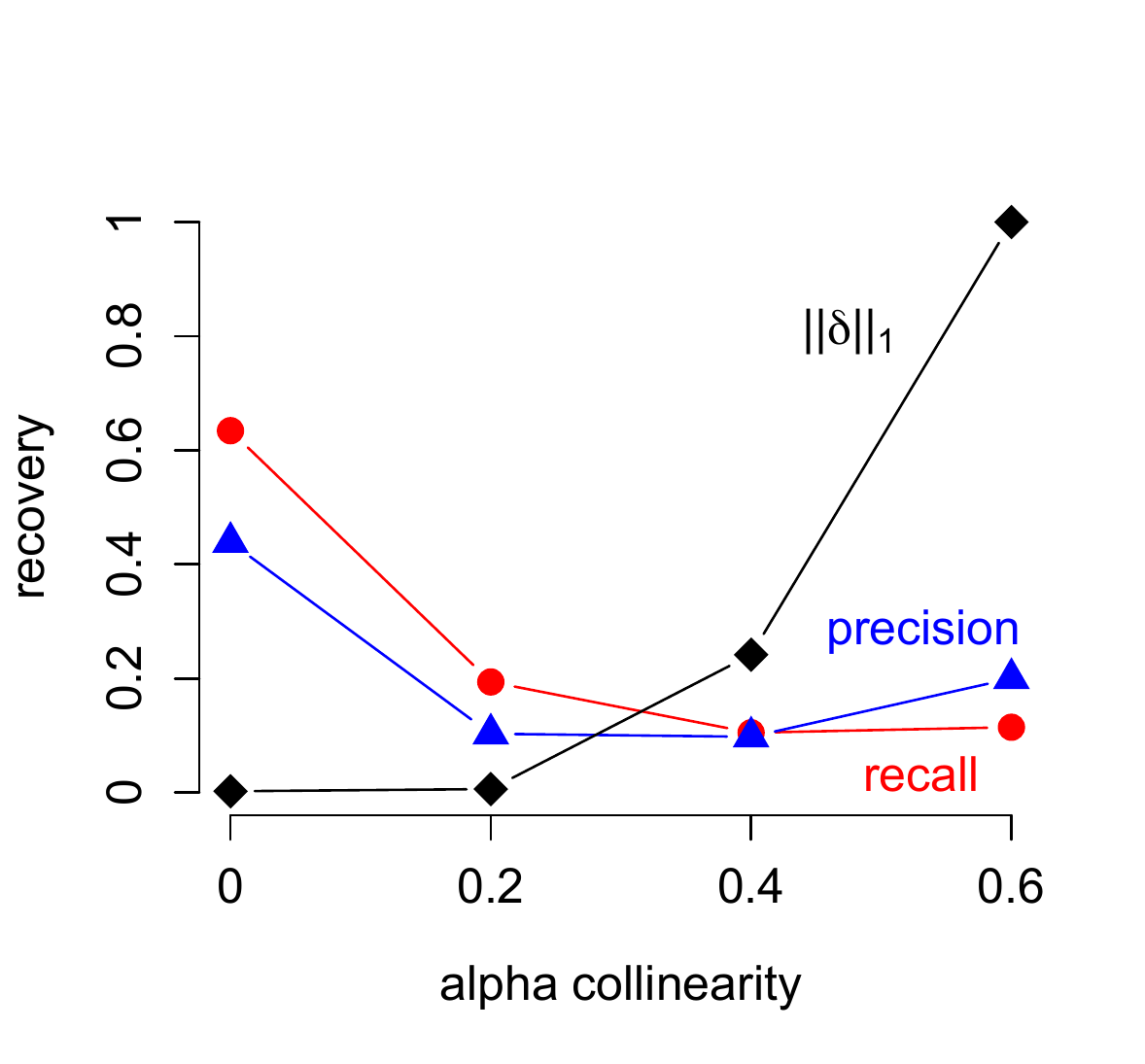}\\
(a)	&(b)
\end{tabular}
\caption{Performance measures of constructing networks with the lasso as a function of collinearity ($\alpha$); collinearity is defined as the probability of identical observations for two nodes whenever these nodes are connected. In (a) Bayes loss (${\color{red}\bullet }$) and logistic loss (${\color{blue} \blacktriangle}$)  and in (b) recovery in terms of recall (${\color{red} \bullet}$) and precision  (${\color{blue} \blacktriangle}$) and the scaled $\ell_{1}$ norm of the error (${\color{black} \blacklozenge}$). }
\label{fig:collinearity}
\end{figure}

These results demonstrate that when either the sparsity assumption or multicollinearity (RE) assumption is violated, the prediction loss decreases, making prediction better. But also that estimation error increases. Hence, the estimated network that predicts well, will not be similar to the true underlying network. On the other hand, if the assumptions of sparsity and RE hold, then many of the edges in the Ising model are estimated correctly but because of the high-dimensional setting many true edges are also missed. And since in sparse settings fewer edges are present that determine the prediction, prediction is poorer.
\section{Discussion}\label{sec:discussion}
Logistic regression is an appropriate tool for prediction and estimation of parameters of the Ising model. Statistical guarantees have been given for prediction and estimation of the parameters of the Ising model using a sequence of logistic regressions whenever at least the assumptions of sparsity and restricted eigenvalues hold. Here we focused on violations of these assumptions and showed why prediction becomes better whenever sparsity or restricted eigenvalues do not hold. Intuitively, for prediction the underlying structure of the graph is irrelevant and when nodes behave similarly, prediction becomes easier. To confirm these intuitions we showed, using connected copies, that prediction loss can decrease as a function of multicollinearity and sparsity. When multicollinearity increases or sparsity decreases, then prediction loss decreases. By consequence of the fact that prediction loss can be considered as a lower bound for estimation error (albeit not a tight bound), estimation error is seen to become worse (increase) as multicollinearity increases or sparsity decreases.   Our simulations support these findings and additionally show that recovery in terms of precision and recall becomes worse when violating the assumption of sparsity and multicollinearity.

The concept of connected copies used here is of course an idealisation of reality. Connected copies can be seen as a way to compare prediction and estimation for different structures (topologies) of graphs, where a connected copy is an extreme case in which the correlation between two variables is 1. We required this idealisation in order to obtain the analytical results. In practice we will not encounter $x_{s}=x_{t}$ but $x_{s}\approx x_{t}$. This case is much more difficult to treat analytically. In the case where $x_{s}\approx x_{t}$ then the parameter estimates will not be equal and the result would depend on the exact differences in estimates. But if we suppose that the sign of all the coefficients is positive, say, then we would expect similar behaviour of the empirical risk based on the results of Proposition \ref{prop:collinearity} and Corollary \ref{cor:sparsity}.

We showed here the consequences of violating the restricted eigenvalue and sparsity assumptions in the Ising model using  logistic regression. The next step is obviously to generalise these results to exponential family distributions. This will require additional restrictions like the margin condition. The margin condition bridges the gap between estimation error and prediction loss. Because for logistic regression we have the linear functional $\mu=\theta^{\sf T}x$, we obtain a quadratic margin. For logistic regression the margin condition then implies that 
$||\hat{\mu}-\mu^{*}||_{2}^{2} \ge \gamma ||\delta ||_{2}^{2}$, where $\delta=\hat{\theta}-\theta^{*}$ and using strong convexity on $\frac{1}{n}\sum_{i=1}^{n}x_{i}x_{i}^{\sf T}$. But the margin condition does not hold in general and so requires additional assumptions \citep[see ][]{Buhlmann:2011} to apply the current analysis of the consequences of violating RE and sparsity on estimation and prediction. 

\section*{Conflict of interest statement}
On behalf of all authors, I declare that none of the authors has a conflict of interest. 

\section*{Appendix}\label{sec:appendix}

{\bf Strong convexity}
The function $\psi$ is strongly convex if for some $\gamma>0$ and all $\theta\in \mathbb{R}^{p}$ it holds that
\begin{align}
\psi(y,\mu) - \psi(y,\mu^{*}) \ge \nabla \psi(y,\mu^{*})^{\sf T}(\theta-\theta^{*}) + \frac{\gamma}{2}||\theta-\theta^{*}||_{2}^{2}
\end{align}
with derivative $\nabla\psi=\partial\psi/\partial \theta$ \citep{Boyd:2004}, which for the logistic function is $(-y+\pi(\mu))x$. Because logistic loss is also $L$-Lipschitz continuous we have that each element of the derivative $\nabla\psi$ is bounded above by $L$, and so 
\begin{align}
\psi(y,\mu) - \psi(y,\mu^{*}) \ge  \frac{\gamma}{2}||\theta-\theta^{*}||_{2}^{2}.
\end{align}
%
%
%
This is equivalent to requiring that the second derivative $\nabla^{2}\psi$ has smallest eigenvalue $\gamma$, because if we assume $\nabla^{2}\psi\ge\gamma I$, where $I$ is the identity matrix, then we have
\begin{align}
\psi(y,\mu) - \psi(y,\mu^{*}) \ge  \frac{\gamma}{2}(\theta-\theta^{*})^{\sf T}\nabla^{2}\psi(\theta-\theta^{*})\ge  \frac{\gamma}{2}||\theta-\theta^{*}||_{2}^{2}.
\end{align}
%

\begin{lemma}\label{lem:emp-proc-bound}
Let $\mathbb{D}_{n,s}(\theta_{s})$ be as defined in (\ref{eq:increment}) for each node $s\in V$, and choose $\lambda_{0}=c\sqrt{t^{2}+\log p/n}$ such that for some $c>0$, 
$\mathbb{D}_{i,s}(\theta_{s})\le c$ for all $i$ and $s\in V$. Then for all $t>0$
we have that 
\begin{align*}
\Prob\left(\max_{s \in V}\left|\mathbb{D}_{n,s}(\theta_{s})\right|\le \lambda_{0}\right) \ge 1-2\exp\left(-nt^{2}\right)
\end{align*}
\end{lemma}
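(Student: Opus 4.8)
The plan is to read off from the definition of the increment in (\ref{eq:increment}) that, for a fixed node $s\in V$ and a fixed $\theta_{s}$, the quantity $\mathbb{D}_{n,s}(\theta_{s})=\frac{1}{n}\sum_{i=1}^{n}\mathbb{D}_{i,s}(\theta_{s})$ is a normalized sum of independent, centered, bounded random variables, and then to combine Hoeffding's inequality with a union bound over the $p$ nodes. Three observations make this work. First, $\E\,\mathbb{D}_{i,s}(\theta_{s})=0$, because the increment is by construction the per-observation loss difference $\psi(y_{i},\mu_{i,s})-\psi(y_{i},\mu_{i,s}^{*})$ minus its expectation. Second, the summands $\mathbb{D}_{1,s}(\theta_{s}),\ldots,\mathbb{D}_{n,s}(\theta_{s})$ are independent, since each depends only on the $i$-th configuration $(y_{i},x_{i})$ and the $n$ configurations are drawn independently from the Ising model (the dependence \emph{within} a configuration is irrelevant here). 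Third, $|\mathbb{D}_{i,s}(\theta_{s})|\le c$ for all $i$ and all $s$, which is the stated hypothesis and in any case follows from the $L$-Lipschitz continuity of the logistic loss together with boundedness of the design, which bounds $|\mu_{i,s}-\mu_{i,s}^{*}|$ and hence the centered increment.

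Granting these, I would first fix $s$ and apply Hoeffding's inequality to the average of the independent, mean-zero variables $\mathbb{D}_{i,s}(\theta_{s})$, which are bounded in absolute value by $c$, to obtain, for every $u>0$,
\begin{align*}
\Prob\left(\left|\mathbb{D}_{n,s}(\theta_{s})\right|>u\right)\le 2\exp\left(-\frac{nu^{2}}{2c^{2}}\right).
\end{align*}
Next I would take a union bound over the $p=|V|$ nodes,
\begin{align*}
\Prob\left(\max_{s\in V}\left|\mathbb{D}_{n,s}(\theta_{s})\right|>u\right)\le 2p\exp\left(-\frac{nu^{2}}{2c^{2}}\right)=2\exp\left(\log p-\frac{nu^{2}}{2c^{2}}\right),
\end{align*}
and finally choose $u=\lambda_{0}$ of the stated form $\lambda_{0}=c'\sqrt{t^{2}+\log p/n}$, so that $\log p-\frac{n\lambda_{0}^{2}}{2c^{2}}=\log p-\frac{(c')^{2}}{2c^{2}}\,(nt^{2}+\log p)$; picking the constant $c'$ large enough relative to $c$ makes this at most $-nt^{2}$, which gives $\Prob(\max_{s}|\mathbb{D}_{n,s}(\theta_{s})|>\lambda_{0})\le 2\exp(-nt^{2})$, and passing to the complementary event yields the claim.

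The argument is elementary; the only points needing care are confirming that $\mathbb{D}_{i,s}(\theta_{s})$ is genuinely centered, so that no bias term survives Hoeffding, and keeping track of constants so that the union-bound factor $p$ is absorbed into the $\log p$ term inside $\lambda_{0}$ (this is where the precise meaning of the constant $c$ in the hypothesis enters). The one place a real obstacle could arise is if one needed the bound to hold \emph{uniformly} over $\theta_{s}$ ranging in some set, rather than pointwise at a fixed $\theta_{s}$; Hoeffding alone would then not suffice, and one would symmetrize, apply a contraction (Ledoux--Talagrand) inequality to strip the Lipschitz loss, and control the resulting Rademacher complexity together with a bounded-differences concentration of the supremum. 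Since the statement is phrased for a fixed $\theta_{s}$ at each node, the elementary route above is all that is required.
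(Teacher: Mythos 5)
Your proposal is correct and follows essentially the same route as the paper's own proof: centering the per-observation increments $\mathbb{D}_{i,s}(\theta_{s})$, invoking their boundedness by $c$, applying Hoeffding's inequality to the average, and taking a union bound over the $p$ nodes with $\lambda_{0}$ of order $\sqrt{t^{2}+\log p/n}$ chosen so the $\log p$ factor is absorbed. The only difference is minor constant bookkeeping in the Hoeffding exponent (your $c'$ versus the paper's direct choice $\lambda_{0}=c\sqrt{t^{2}+\log p/n}$), which does not change the argument.
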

\begin{proof}(Lemma \ref{lem:emp-proc-bound})
In order to obtain a bound on the prediction loss, we need to bound the stochastic part in the empirical risk $R_{n,\psi}(\mu)$.
We have by definition of $\hat{\theta}$ that 
\begin{align}\label{eq:opt}
\frac{1}{n}\sum_{i=1}^{n}\psi(y_{i},\hat{\mu}_{i}) + \lambda||\hat{\theta}||_{1}\le \frac{1}{n}\sum_{i=1}^{n}\psi(y_{i},\mu^{*}_{i}) + \lambda||\theta^{*}||_{1}.
\end{align}
Let 
\begin{align}\label{eq:emp-process}
\mathbb{G}_{n}(\theta)=\frac{1}{n}\sum_{i=1}^{n}\left(\psi(y_{i},\mu_{i})-\E\psi(y_{i},\mu_{i})\right)
\end{align}
be the empirical process of the logistic loss indexed by $\theta=(m_{s},(A_{st},t\in V\backslash\{s\}))$ through $\mu_{\theta}(x)$. We can rewrite the left hand side of the above equation (\ref{eq:opt}), by subtracting and adding the theoretical risk, as 
\begin{align*}
\mathbb{G}_{n}(\hat{\theta}) +\frac{1}{n}\sum_{i=1}^{n}\E\psi(y_{i},\hat{\mu}_{i})+ \lambda||\hat{\theta}||_{1} 
\end{align*}
and similarly for the right hand side of (\ref{eq:opt}) with $\theta^{*}$, where we obtain
$\mathbb{G}_{n}(\theta^{*}) +\frac{1}{n}\sum_{i=1}^{n}\E\psi(y_{i},\mu^{*}_{i})+ \lambda||\theta^{*}||_{1}$.
Then plugging these in (\ref{eq:opt}), we obtain
\begin{align}\label{eq:basic-ineq}
\mathcal{L}_{\psi}(\hat{\mu}) + \lambda||\hat{\theta}||_{1}
\le -\left( \mathbb{G}_{n}(\hat{\theta}) - \mathbb{G}_{n}(\theta^{*}) \right)+ \lambda||\theta^{*}||_{1}
\end{align}
Define the increments of the empirical process by
\begin{align}\label{eq:increment}
\mathbb{D}_{n}(\hat{\theta})=\mathbb{G}_{n}(\hat{\theta}) - \mathbb{G}_{n}(\theta^{*}).
\end{align}
If we can bound $|\mathbb{D}_{n}(\theta)|$ such that the influence of the increment is negligible, then we see from (\ref{eq:basic-ineq}) that we can bound prediction loss by the $\ell_{1}$ norm of $\theta^{*}$. 

We define
$\mathbb{D}_{i,s}(\theta) = (\psi_{i,s} - \psi_{i,s}^{*}) - (\E\psi_{i,s} - \E\psi_{i,s}^{*})$, where $\psi_{i,s}$ is the logistic function for observastion $i$ and node $s\in V$.
Note that $\frac{1}{n}\sum_{i=1}^{n}\mathbb{D}_{i,s}(\theta)=\mathbb{D}_{n,s}(\theta)$ as defined in (\ref{eq:increment}), and $\E(\mathbb{D}_{i,s}(\theta))=0$ for all $i=1,\ldots,n$. 
By assumption there is $c>0$ such that $|\mathbb{D}_{i,s}(\theta)|\le c$ for all $i$ and $s$. For some $\lambda_{0}>0$ by Hoeffding's lemma \citep[e.g.,][]{Bousquet:2004,Venkatesh:2013} we have 
\begin{align*}
\Prob\left( \left|\frac{1}{n}\sum_{i=1}^{n}\mathbb{D}_{i,s}(\theta)\right|> \lambda_{0} \right) 
 	&\le 2\exp\left(  - \frac{2n\lambda_{0}^{2}}{c^{2}} \right) 
\end{align*}
And with $\lambda_{0}=c\sqrt{t^{2}+\log p/n}$ for some $t>0$
\begin{align*}
\Prob\left(\max_{s\in V} \left|\frac{1}{n}\sum_{i=1}^{n}\mathbb{D}_{i,s}(\theta)\right|> \lambda_{0} \right) 
 	&\le 2p\exp\left(  - \frac{2nt^{2}}{2} - \frac{2n\log p}{2n} \right) 
\end{align*}
where in the last inequality we have used the union bound to account for all nodes $p= |V|$.
\end{proof}
%

\begin{lemma}\label{lem:pred-loss-const}
Let $\theta\mapsto \mu_{\theta}(x)$ be the linear function for the Ising model (\ref{eq:log-odds}) and $\hat{\theta}$ is the lasso estimate (\ref{eq:lasso}) obtained with $\lambda\ge 2\lambda_{0}$. Let $\theta^{*}$ be the optimum of the theoretical risk $\theta^{*}=\arg \inf_{\theta}R_{\psi}(\mu_{\theta})$. Then for all nodes in $V$ 
such that  
$|\mathbb{D}_{n}(\theta)|\le\lambda_{0}$ we have with probability $1-2\exp(-nt^{2}/2)$
\begin{align}
\mathcal{L}_{\psi}(\hat{\mu}) =\frac{1}{n}\sum_{i=1}^{n}\E(\psi(y_{i},\hat{\mu}_{i})-\psi(y_{i},\mu_{i}^{*}))\le 2\lambda  ||\theta^{*}||_{1}
\end{align}
If $\lambda$ is of order $O(\sqrt{\log p/n})$ and if $||\theta^{*}||_{1}=o(\sqrt{n/\log p})$, then $\mathcal{L}_{\psi}(\hat{\mu})=o(1)$.
\end{lemma}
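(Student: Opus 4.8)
The plan is to run the standard ``slow rate'' lasso argument, starting from the basic inequality $(\ref{eq:basic-ineq})$, which already packages the optimality of $\hat{\theta}$ together with the split of the empirical risk into the empirical process $\mathbb{G}_{n}$ and the theoretical risk. Written through the increment $\mathbb{D}_{n}(\hat{\theta})=\mathbb{G}_{n}(\hat{\theta})-\mathbb{G}_{n}(\theta^{*})$ of $(\ref{eq:increment})$, it reads
\begin{align*}
\mathcal{L}_{\psi}(\hat{\mu})+\lambda||\hat{\theta}||_{1}\le -\mathbb{D}_{n}(\hat{\theta})+\lambda||\theta^{*}||_{1},
\end{align*}
so the whole problem reduces to showing that the stochastic term $-\mathbb{D}_{n}(\hat{\theta})$ is negligible relative to the $\ell_{1}$ terms.

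First I would condition on the event of Lemma $\ref{lem:emp-proc-bound}$, on which $\max_{s\in V}|\mathbb{D}_{n,s}|\le\lambda_{0}$ for the prescribed choice of $\lambda_{0}$ and which carries the stated probability. Using that logistic loss is $L$-Lipschitz in $\mu$ and that $\mu_{\theta}=x^{\sf T}\theta$ is linear in $\theta$, the increment is controlled linearly in the $\ell_{1}$ distance, $|\mathbb{D}_{n}(\hat{\theta})|\le\lambda_{0}||\hat{\theta}-\theta^{*}||_{1}$, and then by the triangle inequality $||\hat{\theta}-\theta^{*}||_{1}\le||\hat{\theta}||_{1}+||\theta^{*}||_{1}$. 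Substituting and collecting terms gives
\begin{align*}
\mathcal{L}_{\psi}(\hat{\mu})+(\lambda-\lambda_{0})||\hat{\theta}||_{1}\le(\lambda+\lambda_{0})||\theta^{*}||_{1}.
\end{align*}
Since $\lambda\ge 2\lambda_{0}$ we have $\lambda-\lambda_{0}\ge\lambda_{0}\ge 0$, so the $||\hat{\theta}||_{1}$ term is nonnegative and may be dropped (or retained as $\frac{\lambda}{2}||\hat{\theta}||_{1}$, recovering a bound of the form $(\ref{eq:pred-loss-const})$), while $\lambda+\lambda_{0}\le\frac{3}{2}\lambda\le 2\lambda$. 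This yields $\mathcal{L}_{\psi}(\hat{\mu})\le 2\lambda||\theta^{*}||_{1}$ on this event.

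For the asymptotic claim I would simply substitute the rates: if $\lambda=O(\sqrt{\log p/n})$ and $||\theta^{*}||_{1}=o(\sqrt{n/\log p})$, then $2\lambda||\theta^{*}||_{1}$ is the product of an $O(1)$ factor with an $o(1)$ factor (using $\sqrt{\log p/n}\cdot\sqrt{n/\log p}=1$), hence $2\lambda||\theta^{*}||_{1}=o(1)$; combined with $\mathcal{L}_{\psi}(\hat{\mu})\ge 0$, which holds for every $\theta$ by the definition of $\theta^{*}$ as noted after $(\ref{eq:prediction-loss})$, this forces $\mathcal{L}_{\psi}(\hat{\mu})=o(1)$.

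The step I expect to be the real obstacle is the control of the empirical process at the data-dependent point $\hat{\theta}$, and in particular obtaining it in the $\ell_{1}$-scaled form $|\mathbb{D}_{n}(\hat{\theta})|\le\lambda_{0}||\hat{\theta}-\theta^{*}||_{1}$: Lemma $\ref{lem:emp-proc-bound}$ as stated is a pointwise Hoeffding-type bound, so upgrading it to a statement valid simultaneously over the relevant $\ell_{1}$-ball --- equivalently, linearising the increment via a contraction inequality for the Lipschitz logistic loss --- is where the work lies. Everything else (the triangle inequality, the rearrangement using $\lambda\ge 2\lambda_{0}$, and the plug-in of rates) is routine, and the union bound over the $p$ nodes has already been carried out inside Lemma $\ref{lem:emp-proc-bound}$.
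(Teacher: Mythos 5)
Your skeleton is the same as the paper's: start from the basic inequality (\ref{eq:basic-ineq}), work on the event of Lemma \ref{lem:emp-proc-bound}, and then plug in the rates $\lambda=O(\sqrt{\log p/n})$, $||\theta^{*}||_{1}=o(\sqrt{n/\log p})$. Where you diverge is the treatment of the increment, and that divergence is exactly where your argument is incomplete. The paper's proof does not use the $\ell_{1}$-scaled bound $|\mathbb{D}_{n}(\hat{\theta})|\le\lambda_{0}||\hat{\theta}-\theta^{*}||_{1}$ at all: it takes the absolute bound $|\mathbb{D}_{n}(\hat{\theta})|\le\lambda_{0}$, which is both the stated hypothesis of the lemma and what Lemma \ref{lem:emp-proc-bound} delivers (a Hoeffding bound at a point, union-bounded over nodes, not over an $\ell_{1}$-ball), inserts it into (\ref{eq:basic-ineq}) with $\lambda\ge 2\lambda_{0}$, and concludes. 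Your scaled form is never established in the paper, and you yourself flag it as ``where the work lies''; as written, your proof therefore rests on an unproven empirical-process statement (a uniform bound over $\{||\theta-\theta^{*}||_{1}\le M\}$, in the spirit of B\"uhlmann--van de Geer's Lemma 14.20, which would indeed need a symmetrization/contraction argument that neither you nor the printed appendix carries out). So relative to the paper's own route, this is a genuine gap: the step you identify as the obstacle is one the intended proof simply does not need, because the bound $|\mathbb{D}_{n}(\theta)|\le\lambda_{0}$ is assumed.

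That said, your instinct is not without payoff, and it is worth noting what each route buys. With the paper's absolute bound the basic inequality literally gives $\mathcal{L}_{\psi}(\hat{\mu})+\lambda||\hat{\theta}||_{1}\le\lambda_{0}+\lambda||\theta^{*}||_{1}\le\tfrac{\lambda}{2}+\lambda||\theta^{*}||_{1}$, which matches the displayed constant $2\lambda||\theta^{*}||_{1}$ only up to an implicit condition such as $||\theta^{*}||_{1}\ge 1/2$ (or one should simply state the bound with the additive $\lambda/2$ term; it does not affect the $o(1)$ conclusion). Your scaled version, if you could prove it, yields $\mathcal{L}_{\psi}(\hat{\mu})+(\lambda-\lambda_{0})||\hat{\theta}||_{1}\le(\lambda+\lambda_{0})||\theta^{*}||_{1}\le 2\lambda||\theta^{*}||_{1}$ cleanly and recovers (\ref{eq:pred-loss-const}) with the penalty term retained — the standard slow-rate lasso argument. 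But within this paper, the honest fix for your write-up is either to supply the uniform-over-the-$\ell_{1}$-ball empirical process bound, or to drop the scaled step and argue directly from $|\mathbb{D}_{n}(\hat{\theta})|\le\lambda_{0}\le\lambda/2$ as the paper does. Your final rate computation and the use of $\mathcal{L}_{\psi}\ge 0$ are fine.
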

\begin{proof} (Lemma \ref{lem:pred-loss-const})
By assumption $|\mathbb{D}_{n}(\hat{\theta})|=|\mathbb{G}_{n}(\hat{\theta}) - \mathbb{G}_{n}(\theta^{*})|\le \lambda_{0}$ with high probability (Lemma \ref{lem:emp-proc-bound}) for all $\theta$. Then if we choose $\lambda\ge 2\lambda_{0}$, we have from (\ref{eq:basic-ineq}) with probability $1-2\exp(-nt^{2}/2)$ that
\begin{align*}
\mathcal{L}_{\psi}(\hat{\mu}) + \lambda||\hat{\theta}||_{1}
\le 2\lambda ||\theta^{*}||_{1} 
\end{align*}
from which the result follows. If $\lambda$ is of order $O(\sqrt{\log p/n})$ and if $||\theta^{*}||_{1}=o(\sqrt{n/\log p})$, then $\mathcal{L}_{\psi}(\hat{\mu})=o(1)$.

\end{proof}
%

\begin{lemma}\label{lem:prediction-error-bound}
Choose $\lambda_{0}$ as in Lemma \ref{lem:emp-proc-bound} and assume the $\mathbb{D}_{i,s}(\theta_{s})$ are uniformly bounded by $c$ as in Lemma \ref{lem:emp-proc-bound}. Then the prediction loss is bounded by the estimation error as
\begin{align*}
\mathcal{L}_{\psi}(\hat{\mu}) 
\le  2\lambda(||\hat{\theta}-\theta^{*}||_{1}) 
\end{align*}
\end{lemma}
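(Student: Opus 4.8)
The plan is to revisit the basic inequality \eqref{eq:basic-ineq} that already underlies the proofs of Lemmas \ref{lem:emp-proc-bound} and \ref{lem:pred-loss-const}, and to rearrange it so that the $\ell_1$ distance $\|\hat\theta-\theta^*\|_1$ — rather than $\|\theta^*\|_1$ — is what appears on the right-hand side. On the high-probability event supplied by Lemma \ref{lem:emp-proc-bound} (probability at least $1-2\exp(-nt^2)$), \eqref{eq:basic-ineq} reads
$$\mathcal{L}_{\psi}(\hat{\mu}) + \lambda\|\hat{\theta}\|_{1} \le -\mathbb{D}_{n}(\hat{\theta}) + \lambda\|\theta^{*}\|_{1} \le |\mathbb{D}_{n}(\hat{\theta})| + \lambda\|\theta^{*}\|_{1}.$$
First I would move $\lambda\|\theta^*\|_1$ to the left and apply the reverse triangle inequality $\|\hat\theta\|_1-\|\theta^*\|_1\ge -\|\hat\theta-\theta^*\|_1$, which turns the display into
$$\mathcal{L}_{\psi}(\hat{\mu}) \le |\mathbb{D}_{n}(\hat{\theta})| + \lambda\|\hat{\theta}-\theta^{*}\|_{1}.$$

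Next I would control the empirical-process increment $\mathbb{D}_n(\hat\theta)$. Using Lemma \ref{lem:emp-proc-bound} in the form in which the increment is measured relative to the $\ell_1$ distance (the standard device: apply the pointwise bound $|\mathbb{D}_{n,s}|\le\lambda_0$ to the normalised direction $(\hat\theta_s-\theta^*_s)/\|\hat\theta_s-\theta^*_s\|_1$, so that $|\mathbb{D}_n(\hat\theta)|\le\lambda_0\|\hat\theta-\theta^*\|_1$), together with the standing choice $\lambda\ge 2\lambda_0$ so that $\lambda_0\le\lambda$, I obtain
$$\mathcal{L}_{\psi}(\hat{\mu}) \le \lambda_0\|\hat\theta-\theta^*\|_1 + \lambda\|\hat\theta-\theta^*\|_1 \le 2\lambda\|\hat\theta-\theta^*\|_1,$$
which is exactly the asserted bound \eqref{eq:prediction-bound-estimation}. (If one only invokes the cruder uniform bound $|\mathbb{D}_n(\hat\theta)|\le\lambda_0\le\tfrac12\lambda$ one still gets $\mathcal{L}_{\psi}(\hat{\mu})\le\lambda_0+\lambda\|\hat\theta-\theta^*\|_1$; the $\ell_1$-weighted form is what makes the estimate clean uniformly in $\hat\theta$, including the regime where $\|\hat\theta-\theta^*\|_1$ is small.)

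The one genuinely nontrivial ingredient is the control of $\mathbb{D}_n(\hat\theta)$: because $\hat\theta$ is random and data-dependent one cannot simply substitute it into the pointwise Hoeffding estimate, and this is precisely what Lemma \ref{lem:emp-proc-bound} — with its union bound over the $p$ nodes, and a peeling/contraction step if one wants the weighted version — is there to supply. Everything else (the reverse triangle inequality, and the passage from $\lambda+\lambda_0$ to $2\lambda$ using $\lambda\ge 2\lambda_0$) is routine bookkeeping. It is worth noting that no use is made of the restricted-eigenvalue or sparsity assumptions here, which is consistent with the fact that this bound is meant to hold even when those assumptions fail, and with its role in the paper as the mechanism by which increases in prediction loss force increases in $\ell_1$ estimation error.
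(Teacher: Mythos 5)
Your proposal rests on the same two ingredients as the paper's proof---the basic inequality (\ref{eq:basic-ineq}) and the reverse triangle inequality---so the overall route is the same; the difference, and the one weak link, is how you handle the empirical-process increment. The paper, on the high-probability event of Lemma \ref{lem:emp-proc-bound}, simply discards $\mathbb{D}_{n}(\hat{\theta})$ (the factor $2$ in the statement is slack), observes that the resulting inequality $\mathcal{L}_{\psi}(\hat{\mu})+\lambda||\hat{\theta}||_{1}\le\lambda||\theta^{*}||_{1}$ together with $\mathcal{L}_{\psi}(\hat{\mu})\ge 0$ forces $||\theta^{*}||_{1}-||\hat{\theta}||_{1}\ge 0$, and then bounds $\mathcal{L}_{\psi}(\hat{\mu})\le\lambda\bigl(||\theta^{*}||_{1}-||\hat{\theta}||_{1}\bigr)\le\lambda||\hat{\theta}-\theta^{*}||_{1}\le 2\lambda||\hat{\theta}-\theta^{*}||_{1}$; no control of the increment beyond the uniform bound $\lambda_{0}$ is needed. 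You instead keep the increment and need the weighted bound $|\mathbb{D}_{n}(\hat{\theta})|\le\lambda_{0}||\hat{\theta}-\theta^{*}||_{1}$, but the justification you give---``apply the pointwise bound to the normalised direction''---does not work as stated: $\mathbb{D}_{n}$ is neither linear nor homogeneous in $\theta-\theta^{*}$, so a bound at the rescaled parameter cannot be rescaled back. The weighted form is true, but it is a genuinely stronger statement than Lemma \ref{lem:emp-proc-bound} provides; it requires the Lipschitz property of logistic loss, $|\mathbb{D}_{i}(\theta)|\le 2L|x_{i}^{\sf T}(\theta-\theta^{*})|\le 2L||x_{i}||_{\infty}\,||\theta-\theta^{*}||_{1}$, combined with a uniform-over-$\ell_{1}$-balls (peeling/contraction) bound of the type in B\"uhlmann and van de Geer's Lemma 14.20---you gesture at this, but it is not supplied by the lemma you cite. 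Your acknowledged fallback, the crude bound $|\mathbb{D}_{n}(\hat{\theta})|\le\lambda_{0}$, yields only $\mathcal{L}_{\psi}(\hat{\mu})\le\lambda_{0}+\lambda||\hat{\theta}-\theta^{*}||_{1}$, which is not the asserted inequality. The repair is simple and uses only the paper's tools: after dropping the increment on the event of Lemma \ref{lem:emp-proc-bound}, move $\lambda||\hat{\theta}||_{1}$ to the right-hand side as $\lambda(||\theta^{*}||_{1}-||\hat{\theta}||_{1})$, note that this difference is nonnegative, and apply the reverse triangle inequality to it. Your closing remark that neither sparsity nor the restricted eigenvalue assumption enters is correct and consistent with the role this bound plays in the paper.
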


\begin{proof}
By (\ref{eq:basic-ineq}) we have with high probability that $\mathcal{L}_{\psi}(\hat{\mu}) + \lambda||\hat{\theta}||_{1}
\le \lambda||\theta^{*}||_{1}$. This implies that $\mathcal{L}_{\psi}(\hat{\mu}) \le \lambda(||\theta^{*}||_{1}- ||\hat{\theta}||_{1})$. Also from (\ref{eq:basic-ineq})  we obtain that $||\theta^{*}||_{1}- ||\hat{\theta}||_{1}\ge 0$; and by the reverse triangle inequality we find that $||\theta^{*}||_{1}- ||\hat{\theta}||_{1}\le ||\theta^{*}-\hat{\theta}||_{1}=||\hat{\theta}-\theta^{*}||_{1}$. This completes the proof.
\end{proof}

\begin{theorem}\label{thm:est-err}
For each node $s$ in the Ising graph $G(V,E)$, let $\psi(y,\mu_{\theta})$ be the logistic loss in (\ref{eq:psi-loss}) with positive definite second derivative matrix $\nabla^{2}\psi(\theta)$, and let $\hat{\theta}$ be the lasso estimate (\ref{eq:lasso}) obtained with $\lambda\ge 2\lambda_{0}=O(\sqrt{\log p/n})$. Suppose that the sparsity assumption \ref{ass:sparsity} holds with $s_{0}=o(\sqrt{n/\log p})$ for all nodes and that the RE assumption \ref{ass:RE} holds with $\gamma_{G}>0$. Then for $\delta_{s}\in\mathbb{C}_{1}$
\begin{align*}
\max_{s\in V}||\delta_{s}||_{1}=\max_{s\in V}||\hat{\theta}_{s}-\theta_{s}^{*}||_{1}
\le 
\frac{16}{\gamma_{G}}s_{0}\lambda
\end{align*}
and $\hat{\theta}$ is consistent for $\theta^{*}$.
\end{theorem}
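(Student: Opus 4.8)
The plan is to run, for one fixed node $s\in V$, the classical ``basic inequality $+$ restricted eigenvalue'' argument for the lasso, and then to observe that every quantity that enters the final bound ($\lambda_0$, the sparsity level $s_0$, the RE constant $\gamma_G$, the Hessian bound $K$) has been assumed to hold uniformly over $V$, so the resulting inequality is valid for all nodes simultaneously on the single high-probability event of Lemma \ref{lem:emp-proc-bound}. Throughout I would write $\delta=\hat\theta-\theta^{*}$ and suppress the node index.

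First I would work on the event $\{\max_{s\in V}|\mathbb{D}_{n,s}(\theta_s)|\le\lambda_0\}$, which by Lemma \ref{lem:emp-proc-bound} has probability at least $1-2\exp(-nt^{2})$. Starting from the basic inequality (\ref{eq:basic-ineq}) and bounding the stochastic increment $\mathbb{D}_{n}(\hat\theta)=\mathbb{G}_{n}(\hat\theta)-\mathbb{G}_{n}(\theta^{*})$ via Lemma \ref{lem:emp-proc-bound} (used in the form $|\mathbb{D}_n(\hat\theta)|\le\lambda_0\|\delta\|_1$), I get $\mathcal{L}_{\psi}(\hat\mu)+\lambda\|\hat\theta\|_{1}\le\lambda_0\|\delta\|_1+\lambda\|\theta^{*}\|_1$. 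Since $\theta^{*}$ is supported on $S_0$, I split all $\ell_1$ norms over $S_0$ and $S_0^{c}$, apply the triangle inequality to $\|\hat\theta\|_{1}=\|\theta^{*}_{S_0}+\delta_{S_0}\|_1+\|\delta_{S_0^{c}}\|_1$, use $\lambda\ge 2\lambda_0$ and $\mathcal{L}_{\psi}(\hat\mu)\ge 0$. This yields two things: the cone inclusion $\delta_s\in\mathbb{C}_1$ (up to the absolute constant hidden in the cone, so that Assumption \ref{ass:RE} becomes applicable), and the upper bound $\mathcal{L}_{\psi}(\hat\mu)\le c_1\lambda\|\delta_{S_0}\|_1$ for an absolute constant $c_1$.

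Next I would convert the prediction loss into a quadratic form. A second-order Taylor expansion of $\theta\mapsto R_{\psi}(\mu_\theta)$ about the population minimiser $\theta^{*}$ annihilates the first-order term and gives $\mathcal{L}_{\psi}(\hat\mu)=\tfrac12\delta^{\sf T}\nabla^{2}\psi(\bar\theta)\delta$ for some $\bar\theta$ on the segment $[\theta^{*},\hat\theta]$, where $\nabla^{2}\psi$ is the Fisher information (\ref{eq:second-deriv}). Using positive definiteness of the population Fisher information together with the restricted eigenvalue bound of Assumption \ref{ass:RE} along directions $\delta_s\in\mathbb{C}_1$, and the fact that the logistic weights $\pi(\mu)\pi(-\mu)$ stay bounded and bounded away from $0$ on the relevant compact set, this produces the restricted strong convexity lower bound $\mathcal{L}_{\psi}(\hat\mu)\ge\tfrac{\gamma_G}{2}\|\delta\|_2^{2}$. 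Combining this with the Step-1 upper bound and $\|\delta_{S_0}\|_1\le\sqrt{s_0}\,\|\delta_{S_0}\|_2\le\sqrt{s_0}\,\|\delta\|_2$ gives $\tfrac{\gamma_G}{2}\|\delta\|_2^{2}\le c_1\lambda\sqrt{s_0}\,\|\delta\|_2$, hence $\|\delta\|_2\le(2c_1/\gamma_G)\lambda\sqrt{s_0}$ and $\|\delta_{S_0}\|_1\le(2c_1/\gamma_G)s_0\lambda$; the cone condition $\|\delta_{S_0^{c}}\|_1\le\|\delta_{S_0}\|_1$ then gives $\|\delta\|_1\le 2\|\delta_{S_0}\|_1\le\tfrac{16}{\gamma_G}s_0\lambda$ after bookkeeping the constants. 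Because $\lambda_0,s_0,\gamma_G,K$ are common to all nodes and Lemma \ref{lem:emp-proc-bound} already controls $\max_{s\in V}|\mathbb{D}_{n,s}|$ on one event, the bound holds for $\max_{s\in V}\|\delta_s\|_1$; finally $s_0\lambda=o(\sqrt{n/\log p})\,O(\sqrt{\log p/n})=o(1)$ by Assumption \ref{ass:sparsity}, giving consistency.

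The main obstacle I expect is the margin step (the Taylor/strong-convexity transfer from $\mathcal{L}_{\psi}(\hat\mu)$ to $\tfrac{\gamma_G}{2}\|\delta\|_2^{2}$): logistic loss is not quadratic, so one must control the Hessian $\nabla^{2}\psi(\bar\theta)$ at an intermediate point rather than at $\theta^{*}$, keeping its restricted eigenvalue bounded below while the diagonal stays below $K$, and must reconcile the population Fisher information of Assumption \ref{ass:RE} with the empirical matrix $\nabla^{2}\psi_{n,s}$ (a concentration/comparison argument, or a restriction of $\bar\theta$ to a small neighbourhood of $\theta^{*}$). It is exactly the cone inclusion $\delta_s\in\mathbb{C}_1$, extracted in Step 1, that makes this lower bound available, which is why the cone condition must be established before the restricted eigenvalue assumption can be invoked.
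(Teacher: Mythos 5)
Your overall architecture is the same as the paper's: basic inequality on the high-probability event of Lemma \ref{lem:emp-proc-bound}, extraction of the cone condition $||\delta_{S_{0}^{c}}||_{1}\le||\delta_{S_{0}}||_{1}$, restricted strong convexity along $\mathbb{C}_{1}$, and the Cauchy--Schwarz step $||\delta_{S_{0}}||_{1}\le\sqrt{s_{0}}||\delta_{S_{0}}||_{2}$, with uniformity over nodes coming from the union bound already built into the lemma. Where you diverge is the middle step, and it is exactly the step you flag as your ``main obstacle'': you Taylor-expand the \emph{population} risk at an intermediate point $\bar\theta$ and then worry about reconciling the population Fisher information with the empirical matrix. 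That difficulty is self-inflicted: Assumption \ref{ass:RE} is stated for the \emph{empirical} Hessian $\nabla^{2}\psi_{n,s}(\theta)$, so no population-to-empirical comparison is needed. The paper lower-bounds the empirical excess risk restricted to $S_{0}$ by $\tfrac{\gamma_{G}}{2}||\delta_{S_{0}}||_{2}^{2}$ directly from Assumption \ref{ass:RE}, writes that empirical quantity as $\mathbb{D}_{n}(\hat\theta_{S_{0}})+\mathcal{L}_{\psi}(\hat\mu_{S_{0}})$, controls the increment by Lemma \ref{lem:emp-proc-bound}, and for the upper bound uses the already established $\mathcal{L}_{\psi}(\hat\mu)\le 2\lambda||\hat\theta-\theta^{*}||_{1}$ of Lemma \ref{lem:prediction-error-bound}, giving the chain $2\lambda||\delta||_{1}\ge\mathcal{L}_{\psi}(\hat\mu)\ge\mathcal{L}_{\psi}(\hat\mu_{S_{0}})\ge\tfrac{\gamma_{G}}{2}||\delta_{S_{0}}||_{2}^{2}\ge\tfrac{\gamma_{G}}{2s_{0}}||\delta_{S_{0}}||_{1}^{2}\ge\tfrac{\gamma_{G}}{8s_{0}}||\delta||_{1}^{2}$ and hence the constant $16/\gamma_{G}$. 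If you rewrite your Step 2 with the empirical expansion, your unresolved transfer argument disappears and your proof closes along the paper's lines.

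Two smaller points of bookkeeping. First, you invoke Lemma \ref{lem:emp-proc-bound} in the multiplicative form $|\mathbb{D}_{n}(\hat\theta)|\le\lambda_{0}||\delta||_{1}$; the lemma as stated gives the uniform bound $\max_{s\in V}|\mathbb{D}_{n,s}(\theta_{s})|\le\lambda_{0}$ without the $||\delta||_{1}$ factor, and the paper's derivation of the cone condition and of (\ref{eq:prediction-bound-estimation}) uses that additive form together with $\lambda\ge2\lambda_{0}$; your version is the standard Lipschitz-contraction variant, but it is not what is proved here, so either prove it or switch to the additive form. Second, your final constant ``after bookkeeping'' should be checked explicitly: with your tighter upper bound $\mathcal{L}_{\psi}(\hat\mu)\le c_{1}\lambda||\delta_{S_{0}}||_{1}$ you would actually get a constant smaller than $16/\gamma_{G}$, whereas the stated theorem's $16/\gamma_{G}$ arises from the looser bound $2\lambda||\delta||_{1}$ combined with $||\delta||_{1}\le2||\delta_{S_{0}}||_{1}$; matching the theorem as stated requires the paper's chain rather than yours.
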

%

%
\begin{proof} (Theorem \ref{thm:est-err})
Using the $\ell_{1}$ norm, we obtain for each node $s\in V$ the lasso error $\delta=\hat{\theta}-\theta^{*}$
\begin{align*}
||\delta||_{1}=||\delta_{S_{0}}||_{1}+||\delta_{S_{0}^{c}}||_{1}.
\end{align*}
Choosing $\lambda>2\lambda_{0}$ as in Lemma \ref{lem:emp-proc-bound}, we have from (\ref{eq:basic-ineq}) that with probability at least $1-2\exp(-nt^{2})$
\begin{align*}
\mathcal{L}_{\psi}(\hat{\mu}) + \lambda||\hat{\theta}_{S_{0}}||_{1}+\lambda||\hat{\theta}_{S_{0}^{c}}||_{1}\le \lambda||\theta^{*}_{S_{0}}||_{1}+\lambda||\theta^{*}_{S_{0}^{c}}||_{1}.
\end{align*}
Note that $||\theta^{*}_{S_{0}}||_{1}-||\hat{\theta}_{S_{0}}||_{1}\le ||\hat{\theta}_{S_{0}}-\theta^{*}_{S_{0}}||_{1}$ and $\theta^{*}_{S_{0}^{c}}=0$. By rearranging the above equation,  for $\lambda>2\sqrt{\log p/n}$, we have by Lemma \ref{lem:pred-loss-const} that 
\begin{align}\label{eq:restrictedS}
 ||\delta_{S_{0}^{c}}||_{1} \le  ||\delta_{S_{0}}||_{1}
\end{align}
with high probability. We therefore have that $\delta\in\mathbb{C}_{1}$. We can then bound $\ell_{1}$ estimation error and obtain
\begin{align}
||\delta||_{1}\le 2||\delta_{S_{0}}||_{1}\le 2\sqrt{s_{0}}||\delta_{S_{0}}||_{2}
\end{align}
where we used the inequality $||\nu||_{1}\le \sqrt{k}||\nu||_{2}$ for $\nu \in \mathbb{R}^{k}$. We can connect the above to prediction loss by considering strong convexity for the restricted setting for $\delta\in \mathbb{C}_{1}$. We have by the RE assumption that for $\gamma_{G}>0$ for all nodes in the graph $G$ that
\begin{align*}
\frac{1}{n}\sum_{i=1}^{n}\psi(y_{i},\hat{\mu}_{i,S_{0}})-\psi(y_{i},\mu^{*}_{i,S_{0}}) \ge \frac{\gamma_{G}}{2}||\delta_{S_{0}}||_{2}^{2}
\end{align*}	
where $\mu_{i,S_{0}}=x_{i,S_{0}}^{\sf T}\theta_{S_{0}}$. 
Using the empirical process $\mathbb{G}_{n}(\theta)$ defined in (\ref{eq:emp-process}) and the increments $\mathbb{D}_{n}(\theta)$ defined in (\ref{eq:increment}), we obtain 
\begin{align*}
\frac{1}{n}\sum_{i=1}^{n}\psi(y_{i},\hat{\mu}_{i,S_{0}})-\psi(y_{i},\mu^{*}_{i,S_{0}}) =\mathbb{D}_{n}(\hat{\theta}_{S_{0}})+\mathcal{L}_{\psi}(\hat{\mu}_{S_{0}}).
\end{align*}	
If $\lambda>2\lambda_{0}$, then by Lemma \ref{lem:emp-proc-bound} the increments $\mathbb{D}_{n}(\theta)<c$ with probability $1-2\exp(-nt^{2})$, and so 
\begin{align*}
2\lambda||\delta||_{1}\ge\mathcal{L}_{\psi}(\hat{\mu})\ge\mathcal{L}_{\psi}(\hat{\mu}_{S_{0}})\ge \frac{\gamma_{G}}{2}||\delta_{S_{0}}||_{2}^{2}\ge \frac{\gamma_{G}}{2s_{0}}||\delta_{S_{0}}||_{1}^{2}
\end{align*}	
where we used the fact that for each subset $S$ of nodes the prediction loss $\mathcal{L}_{\psi}(\mu_{S})\ge 0$. Rearranging and using that $\delta\in \mathbb{C}_{1}$, gives
\begin{align*}
\frac{16}{\gamma_{G}}\lambda s_{0}||\delta||_{1}\ge4||\delta_{S_{0}}||_{1}^{2}\ge ||\delta||_{1}^{2}
\end{align*}	
as claimed.
\end{proof}
%

\noindent
{\bf Coordinate descent}
An optimisation algorithm for the convex lasso problem (\ref{eq:lasso}) can be characterised by the Karush-Kuhn-Tucker (KKT) conditions \citep[see e.g., ][]{boydVandenberghe04}. The function $\theta_{j}\mapsto |\theta_{j}|$ is not differentiable at 0, and so we require a subdifferential $\partial |\theta_{j}|$ for each $j$. The KKT condition is the subgradient
\begin{align}\label{eq:kkt}
&\frac{1}{n}\sum_{i=1}^{n}(-y_{i}+\pi(\hat{\mu}))x_{i} = \lambda \partial||\hat{\theta}||_{1}
\end{align}
where $\hat{\mu}=\mu_{\hat{\theta}}(x_{i})$, and the subdifferential vector $\partial||\hat{\theta}||_{1}$ is
\begin{align*}
\partial|\hat{\theta}_{j}|\in
\begin{cases}
\{-1\}	&\text{if } \hat{\theta}_{j}<0\\
\{1\}	&\text{if } \hat{\theta}_{j}>0\\
[-1,1]	&\text{if } \hat{\theta}_{j}=0\\
\end{cases}
\end{align*}
The left hand side of the KKT condition (\ref{eq:kkt}) is the first derivative of the empirical risk function $R_{n,\psi}$. The KKT condition with the subdifferential makes clear that a solution $\hat{\theta}_{j}$ for all $j$ will be shrunken towards 0 by the penally $\lambda$. If $
|\frac{1}{n}\sum_{i=1}^{n}(-y_{i}+\pi(\hat{\mu}))x_{i,j}| \le \lambda$ then it will be set to 0, otherwise it will be shrunken towards 0 by $\lambda$.

A solution $\hat{\theta}$ that satisfies the KKT condition can be obtained by, for instance, a subgradient or coordinate descent algorithm \citep{Hastie:2015}. The advantage of a convex program is that any local optimum is in fact a global optimum \citep[see e.g., ][]{Bertsimas:1997}. 
In a coordinate descent algorithm each $\theta_{j}$ is obtained by minimisation using the KKT condition and a quadratic approximation to $\psi$ and updated in turn for all $p$ parameters. The parameters $\theta_{j}$ can be updated in turn because the empirical risk $R_{n,\psi}$ is twice differentiable and convex and the $\ell_{1}$ penalty is a sum of convex functions and hence convex \citep[see, e.g., ][chapter 5]{Hastie:2015}. For logistic regression optimisation remains slow since logistic loss needs to be optimised iteratively. Optimisation can be speeded up by using a quadratic approximation to logistic loss to obtain for each coordinate of $\theta$ separately an update. Let $\theta^{t}$ be the update obtained at step $t$ and let $\delta^{t}=\theta-\theta^{t}$. 
Then we obtain for the quadratic approximation the KKT condition
\begin{align}
\frac{1}{n}\sum_{i=1}^{n}\nabla_{j}\psi(y_{i},x_{i}^{\sf T}\theta^{t})+\nabla^{2}_{jj}\psi(y_{i},x_{i}^{\sf T}\theta^{t})\delta^{t}_{j} =\lambda\partial|\delta^{t}_{j}|
\end{align}
which leads to an estimate for each coordinate $j$ in $1,\ldots, p$. We define 
\begin{align}
(\nabla^{2}_{jj}\psi^{t})^{-1}\nabla_{j}\psi^{t}=\frac{1}{n}\sum_{i=1}^{n}(\nabla^{2}_{jj}\psi(y_{i},x_{i}^{\sf T}\theta^{t}))^{-1}\nabla_{j}\psi(y_{i},x_{i}^{\sf T}\theta^{t}).
\end{align}
At time $t+1$ this gives the update
\begin{align}\label{eq:coord-descent}
\theta^{t+1}_{j} = \theta^{t}_{j} -
\begin{cases}
(\nabla^{2}_{jj}\psi^{t})^{-1}\nabla_{j}\psi^{t} -\lambda
	&\text{if } (\nabla^{2}_{jj}\psi^{t})^{-1}\nabla_{j}\psi^{t} \hspace{.5em}>\lambda\\
(\nabla^{2}_{jj}\psi^{t})^{-1}\nabla_{j}\psi^{t} +\lambda
	&\text{if } (\nabla^{2}_{jj}\psi^{t})^{-1}\nabla_{j}\psi^{t} \hspace{.5em}<-\lambda\\
0
	&\text{if } |(\nabla^{2}_{jj}\psi^{t})^{-1}\nabla_{j}\psi^{t}| \le\lambda.	
\end{cases}
\end{align}
This last equation clearly shows how the threshold of the lasso works: If the update is within $\lambda$ of 0, then it is put to 0 exactly, otherwise it is shrunk to 0 by $\lambda$. Nodewise optimisation for the Ising graph is implemented in the {\sf\small R} package {\em IsingFit}  \citep{Borkulo:2014} using {\em glmnet} \citep{Friedman:2010} which employs a coordinate descent algorithm.

\begin{proof}(Lemma \ref{lem:copy})
Suppose $t\in L_{t}=\{t\}$ is a connected copy of $s\in K$. Recall that logistic loss $\psi(y_{i},\mu_{i})=-y_{i}\mu_{i}+\log(1+\exp(\mu_{i}))$ depends on the parameter $\theta$ only in $\mu_{i}=x_{i}^{\sf T}\theta$. Because $x_{s}=x_{t}$ we have the estimate $\hat{\theta}_{s}=\hat{\theta}_{t}$. Then a solution $\hat{\theta}$ obtained with the lasso in (\ref{eq:lasso}) with $\alpha\hat{\theta}_{s}$ and $(1-\alpha)\hat{\theta}_{s}$ yields for all $i$ 
\begin{align*}
\mu_{i}=\sum_{j\in V\backslash \{s,t\}}x_{i,j}\hat{\theta}_{j} + x_{i,s}\alpha\hat{\theta}_{s} + x_{i,t}(1-\alpha)\hat{\theta}_{t}=
\sum_{j\in V\backslash \{t\}}x_{i,j}\hat{\theta}_{j} 
\end{align*}
which equals the version where node $t$ is deleted from the data.
Hence, for any $0\le \alpha\le 1$ the parameter in logistic loss is the same for all $i$ for such connected copies. It follows that the empirical risk $R_{n,\psi}$ is the same for all such copies.
Similarly, the $\ell_{1}$ norm gives 
\begin{align*}
||\hat{\theta}||_{1}=\sum_{j\in V\backslash \{s,t\}}|\hat{\theta}_{j}| + \alpha|\hat{\theta}_{s}| + (1-\alpha)|\hat{\theta}_{t}|=
\sum_{j\in V\backslash \{t\}}|\hat{\theta}_{j}|
\end{align*}
as claimed. If there are multiple connected copies in $L_{t}$, then choose $0\le \alpha_{j}\le 1$ such that $\sum_{j=1}^{|L_{t}|}\alpha_{j}=1$. Then $\mu_{i}$ is again a sum over $V\backslash L_{t}$ for any $t$, implying the same value for $\psi_{i}$ and hence for $R_{n,\psi}$ as when the nodes in $L_{t}$ were deleted. The same holds for the $\ell_{1}$ norm.
\end{proof}
\begin{proof}(Proposition \ref{prop:collinearity}) 
By Lemma \ref{lem:copy} we have that 
\begin{align*}
\mu_{i,\backslash L_{2}}=\mu_{i,\backslash L_{1}}+\sum_{j\in L_{1}^{c}\cap L_{2}} x_{i,j}\theta_{j}
\end{align*}
since $L_{1}\subset L_{2}$ by assumption. It is easily seen that 
\begin{align*}
\psi(y_{i},\mu_{i,\backslash L_{1}}) - \psi(y_{i},\mu_{i,\backslash L_{2}}) = 
	y_{i}(\mu_{i,\backslash L_{2}} -\mu_{i,\backslash L_{1}}) + \log \left( \frac{1+\exp(\mu_{i,\backslash L_{1}})}{1+\exp(\mu_{i,\backslash L_{2}})} \right)
\end{align*}
Recall that $\log(1+\exp(a))\ge a$. Because $y_{i}$ and $x_{i,j}$ are either 0 or 1, we obtain by the assumption of connected copies that $\sum_{j\in L_{1}^{c}\cap L_{2}} x_{i,j}\theta_{j}>0$, and so $\psi(y_{i},\mu_{i,\backslash L_{1}}) - \psi(y_{i},\mu_{i,\backslash L_{2}}) \ge 0$.
\end{proof}
\begin{proof}(Corollary \ref{cor:sparsity}) 
From Proposition \ref{prop:collinearity} we have the first half that shows that if $\sum_{j\in S_{0}^{c}\cap S_{a}} x_{i,j}\theta_{j}>0$, then $\psi(y_{i},\mu_{i,\backslash S_{0}}) - \psi(y_{i},\mu_{i,\backslash S_{a}}) \ge 0$. The other way around is similar:  if $\sum_{j\in S_{0}^{c}\cap S_{a}} x_{i,j}\theta_{j}<0$, then $\psi(y_{i},\mu_{i,\backslash S_{0}}) - \psi(y_{i},\mu_{i,\backslash S_{a}}) \le 0$, which completes the proof.
\end{proof}
\end{document}